\DeclarePairedDelimiter{\ceil}{\lceil}{\rceil}
\newcommand{\evcomment}[1]{\textcolor{blue}{\textbf{#1}}} 
\title{Preconditioned eigensolvers for large-scale nonlinear Hermitian eigenproblems with variational characterizations. II. Interior eigenvalues\thanks{This version dated \today. This work was supported by the National Science Foundation under grants DMS-1115520, DMS-1418882 and DMS-1419100.}}
\author{Daniel B. Szyld\thanks{Department of Mathematics, Temple University (038-16), 1805 N. Broad Street, Philadelphia, PA 19122-6094, USA ({\tt szyld@temple.edu})}
\and 
Eugene Vecharynski\thanks{
Computational Research Division, Lawrence Berkeley National Laboratory,
One Cyclotron Road, MS 50F-1620L, Berkeley, CA 94720, USA ({\tt evecharynski@lbl.gov})
} 
\and 
Fei Xue\thanks{Department of Mathematics, University of Louisiana at Lafayette, P.O. Box 41010, Lafayette, LA 70504-1010, USA ({\tt fxue@louisiana.edu})}
}
\newtheorem{prop}[theorem]{Proposition}
\newtheorem{dftn}[theorem]{Definition}
\begin{document}

\maketitle




\setcounter{page}{1}

\begin{abstract}
We consider the solution of large-scale nonlinear algebraic Hermitian eigenproblems of the form $T(\lambda)v=0$ that admit a variational characterization of eigenvalues. These problems arise in a variety of applications and are generalizations of linear Hermitian eigenproblems $Av\!=\!\lambda Bv$. In this paper, we propose a Preconditioned Locally Minimal Residual (PLMR) method for efficiently computing interior eigenvalues of problems of this type. We discuss the development of search subspaces, preconditioning, and eigenpair extraction procedure based on the refined Rayleigh-Ritz projection. Extension to the block methods is presented, and a moving-window style soft deflation is described. Numerical experiments demonstrate that PLMR methods provide a rapid and robust convergence towards interior eigenvalues. The approach is also shown to be efficient and reliable for computing a large number of extreme eigenvalues, dramatically outperforming standard preconditioned conjugate gradient methods. 
\end{abstract}

{\scriptsize{\textbf{\!\!AMS subject classifications.} 65F15, 65F50, 15A18, 15A22.}}

\pagestyle{myheadings} \thispagestyle{plain}

\section{Introduction} \label{ch1_sect_intro}
Nonlinear Hermitian algebraic eigenproblems of the form $T(\lambda)v=0$ arise naturally in a variety of scientific and engineering applications. Many of these problems allow for a variational characterization (min-max principle) of some eigenvalues on certain intervals. Desirable properties of these eigenvalues and associated eigenvectors can be derived, and special methods can be developed to compute them efficiently. In Part I of this study \cite{Szyld.Xue.2014b}, we investigated Preconditioned Conjugate Gradient (PCG) methods for computing extreme eigenvalues of the nonlinear Hermitian eigenproblem that satisfy a variational principle. In this paper, to continue our study, we develop and explore Preconditioned Locally Minimal Residual (PLMR) methods for computing interior eigenvalues. Our exploration was motivated by a new class of preconditioned eigensolvers for linear eigenproblems \cite{Vecharynski.thesis.2011}, \cite{Vecharynski.Knyazev.2014}. 

Interior eigenvalues are intrinsically more difficult to compute than extreme eigenvalues. This is the case even for linear Hermitian problems $Av=\lambda Bv$. 
To devise an efficient and reliable interior eigenvalue solver, several issues need to be addressed. First, a good preconditioner $M$ approximating $A-\sigma B$ must be available, where $\sigma$ is a real shift close to the desired eigenvalues. Second, appropriate variants of 
subspace projection and eigenpair extraction should be used to provide a rapid and robust convergence towards interior eigenvalues. In particular, the extraction should identify and discard spurious Ritz values. 
Exactly the same types of challenges arise when solving nonlinear interior eigenproblems. 

For the first issue, we note that efficient and robust preconditioners for interior eigenvalue computations are typically constructed with indefinite matrices.  Their development is rather challenging in general, and is out of the scope of this paper. Nevertheless, several options are readily available, e.g., the incomplete $\mathrm{LDL^T}$ factorization \cite{Hagemann.Schenk.2006}, \cite{Schenk.etal.2008}, the absolute value preconditioning \cite{Vecharynski.Knyazev.2013}, or any iterative solver for a corresponding indefinite linear system. 
Here, we assume that a suitable preconditioner is at hand, and focus on the development of search subspaces and mechanisms for extracting approximate eigenpairs. 

The search subspaces suggested within the proposed PLMR method are given by certain preconditioned Krylov subspaces that are augmented with a search direction connecting the eigenvector approximations obtained in two consecutive iterations. 
Therefore, they can be viewed as a natural generalization of the PCG search subspaces~\cite{Szyld.Xue.2014b}.
The main difference with PCG is that the PLMR search subspace is based on an augmented Krylov subspace of a larger dimension, which enhances the robustness of convergence. 
Nevertheless, if a good preconditioner is available, the PLMR subspace does not have to be significantly larger than that of PCG. Therefore, similar to PCG, the eigenpair approximations in PLMR are normally extracted from subspaces of a small size. 

  

In order to extract interior eigenpairs of linear eigenproblems, the harmonic Rayleigh-Ritz pocedure \cite{Morgan.1991}, \cite{Sleijpen.Vorst.1996} is widely used. The same idea can be applied in the nonlinear case, which leads to the projected nonlinear eigenproblem $U^*T(\sigma)^*T(\nu)Uy=0$, where $U$ contains basis vectors of the search subspace. Then, the harmonic Ritz pairs $(\nu, U y)$ with $\nu$ close to $\sigma$ provide approximations to the desired interior eigenpairs of the original problem. 

The main disadvantage of the harmonic Rayleigh-Ritz projection is that it does not preserve symmetry, i.e., the projected problem $U^*T(\sigma)^*T(\nu)Uy=0$ is no longer Hermitian. The loss of symmetry is unlikely to be a major issue for linear problems. 
However, in the nonlinear case, solving projected eigenproblems that do not preserve the original structure could cause considerable complications. In particular, it may require special treatment of invariant pairs~\cite{Effenberg.2013}, and is likely to incur significant loss of accuracy in the final solutions. 

We avoid this issue by using the standard Rayleigh-Ritz procedure.
The resulting projected eigenproblem $U^*T(\nu)Uy=0$ is also Hermitian, with eigenvalues satisfying the variational principle. 
To remedy the slow convergence towards interior eigenvalues, which is commonly intrinsic to the standard 
Rayleigh-Ritz approach, we propose a simple strategy for discarding spurious Ritz values,
followed by an 
eigenvector refinement procedure 
(see \cite{Jia.1997} for linear eigenproblems) 
that stabilizes and accelerates the convergence.
Our experience shows that such a refined projection outperforms the harmonic Rayleigh-Ritz approach, 
and is crucial for maintaining robust convergence. We observe that the effects of the eigenvector refinement are significantly more pronounced in the nonlinear setting.


To understand the local convergence of the new method, we shall discuss a close connection between the PLMR search subspace 
and that of the basic Jacobi-Davidson (JD) method using the right-preconditioned GMRES as a solver for the correction equation. This connection allows derivation of the order of local convergence of PLMR, established under an assumption on the approximation property of the refined Rayleigh-Ritz procedure. Our analysis shows that PLMR with a search subspace of a fixed size converges linearly, and it exhibits a higher order of convergence if the search subspace is 
expanded with every new iteration. 

For the case where several eigenpairs are wanted, we present a block variant of the PLMR method, called BPLMR.
To the best of our knowledge, BPLMR is the first block variant of a preconditioned eigensolver for computing interior eigenvalues of nonlinear eigenproblems. 
In this algorithm, a special care is taken to ensure the robustness of the eigenvector refinement procedure, which is enhanced to avoid repeated convergence of semi-simple and clustered eigenvalues. Moreover, special attention is devoted to computing a large number of successive eigenvalues, for which a moving-window-style soft deflation strategy is described.
%


The proposed PLMR methods use several well-established techniques that contribute to fast and robust convergence towards interior eigenvalues. They share similarities with the nonlinear Arnoldi method \cite{Voss.2004} as both are ``preconditioned eigensolvers'' based on projections onto the Krylov-like search subspaces constructed by a preconditioned linear operator. They also possess features of the nonlinear Jacobi-Davidson method \cite{Betcke.Voss.2004}, \cite{Voss.2007} in the use of stabilized preconditioners. Consequently, we expect that PLMR performs at least as well as nonlinear Arnoldi and JD. In addition, the suggested eigenvector refinement procedure further improves the convergence, especially if the preconditioner is not very strong, or if clustered or semi-simple eigenvalues are desired. 

The paper is organized as follows. Section 2 reviews basics of nonlinear Hermitian eigenproblems, 
including a nonlinear variational principle. 
In Section~3, we propose a basic PLMR method for computing one interior eigenvalue of $T(\lambda)v=0$ around a given shift. 
Section~4 provides an insight into the connection between the search subspaces of PLMR and of the right-preconditioned \mbox{GMRES} used as an inner solver for a basic JD method, leading to a local convergence result for PLMR. In Section~5, we develop BPLMR for computing several interior eigenvalues simultaneously. Numerical results, which demonstrate the efficiency of PLMR methods, are presented in Section~6. 
Our conclusions can be found in Section~7.

\section{Nonlinear Hermitian eigenproblem and variational principle}\label{sect_preliminaries} In this section, we describe nonlinear algebraic Hermitian eigenproblems $T(\lambda)v=0$ that admit a variational characterization on an open interval $J$. Here, $T(\cdot): J \subset \mathbb{R}\rightarrow \mathbb{C}^{n \times n}$ maps a real scalar $\mu \in J$ continuously to the Hermitian matrix $T(\mu)$. The scalar $\lambda \in J$, for which $T(\lambda)$ is singular, is an eigenvalue of $T(\cdot)$ with a corresponding eigenvector $v \in \mathrm{null}\,T(\lambda) \setminus \{0\}$. 

To simplify our analysis, we assume, as in Part I \cite{Szyld.Xue.2014b}, that $T(\cdot)$ does not have infinite eigenvalues on $J$. This assumption is valid for most Hermitian eigenproblems encountered in practice. Under the assumption, $J=(a,b)$ containing all eigenvalues of interest is finite, where $a$ and $b$ are not eigenvalues of $T(\cdot)$. In certain circumstances, $T(\cdot)$ does have infinite eigenvalues (for instance, linear Hermitian eigenproblems $Av = \lambda Bv$ with a semi-definite $B$), but those eigenvalues generally have little physical relevance and thus are rarely desired. 

We start the description with several definitions.

\begin{dftn}
The \emph{Rayleigh functional} $\rho(\cdot): D \rightarrow J$ is a continuous mapping of a vector $x \in D\subset \mathbb{C}^n\setminus \{0\}$ to the {unique} solution $\rho(x) \in J$ of the equation $x^*T(\rho(x))x=0$. 
\end{dftn}

\begin{dftn}
Given $T(\cdot): J \subset \mathbb{R}\rightarrow \mathbb{C}^{n \times n}$, $J \subset \mathbb{R}$ is called an interval of \emph{positive or negative type}, if $(\mu-\rho(x))(x^*T(\mu)x)$ is constantly positive or constantly negative, respectively, for all $x \in D$ and all $\mu \in J$, $\mu \ne \rho(x)$. Both positive and negative type are \emph{definite type}. 
\end{dftn}

\begin{dftn}\label{ktheigenvalue}
A real scalar $\lambda$ is \emph{the $k$-th eigenvalue} of $T(\cdot)$ if zero is the $k$-th largest eigenvalue of the matrix $T(\lambda)$. Unless noted otherwise, the $k$-th eigenvalue is denoted as~$\lambda_k$. 
\end{dftn}

Necessary and sufficient conditions for $J$ to be of definite type are given as follows.

\begin{prop}[Proposition 2.4 in \cite{Szyld.Xue.2014b}]\label{intJdef}
Let $J=(a,b) \subset \mathbb{R}$ be finite, where $a,b$ are not eigenvalues of $T(\cdot)$, and let   $\rho: D\rightarrow J$ be the Rayleigh functional, where $D=\mathbb{C}^n\setminus\{0\}$. Then $J$ is an interval of positive \textup{(}negative\textup{)} type if and only if $T(a)$ is negative \textup{(}positive\textup{)} definite and $T(b)$ is positive \textup{(}negative\textup{)} definite. Assume that $T(\cdot)$ is continuously differentiable. Then $J$ is of positive \textup{(}negative\textup{)} type if $x^*T'(\rho(x))x > 0\:(<0)$ for all $x \in D$. If, in addition, $T(\cdot)$ is twice continuously differentiable and $x^*T''(\rho(x))x \ne 0$  for all $x \in D$, then $J$ is of positive \textup{(}negative\textup{)} type if and only if $x^*T'(\rho(x))x> 0\:(<0)$ for all $x \in D$. 
\end{prop}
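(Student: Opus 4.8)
The plan is to reduce every assertion to elementary scalar calculus applied to the one-variable function $g_x(\mu) := x^*T(\mu)x$, uniformly over $x \in D$. For fixed nonzero $x$, the fact that $T(\mu)$ is Hermitian makes $g_x$ real-valued, and it is continuous on the closure $[a,b]$ (and $C^1$ or $C^2$ under the respective smoothness hypotheses on $T$). By the definition of the Rayleigh functional, $g_x(\rho(x))=0$; and because $\rho$ is assumed defined on all of $D$, this is the \emph{unique} zero of $g_x$ in $J$. The single observation that drives the whole proof is that, since $g_x$ is continuous and vanishes only at $\rho(x)$ in $(a,b)$, the intermediate value theorem forces $g_x$ to have constant sign on each of $(a,\rho(x))$ and $(\rho(x),b)$. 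Consequently the global sign pattern of $g_x$ on $J$ is completely determined by its sign immediately to the left and right of $\rho(x)$, and ``$J$ is of positive type'' is merely the statement that, for every $x$, $g_x<0$ on $(a,\rho(x))$ and $g_x>0$ on $(\rho(x),b)$, with negative type being the mirror image.

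For the boundary characterization I would argue both implications from this reduction. If $J$ is of positive type, then letting $\mu\to a^+$ and $\mu\to b^-$ and invoking continuity gives $g_x(a)\le 0\le g_x(b)$ for every nonzero $x$, so $T(a)$ is negative semidefinite and $T(b)$ positive semidefinite; since $a$ and $b$ are not eigenvalues, $T(a)$ and $T(b)$ are nonsingular, which upgrades semidefiniteness to definiteness. Conversely, if $T(a)\prec 0$ and $T(b)\succ 0$, then $g_x(a)<0<g_x(b)$, and combining these endpoint signs with the constant-sign property on each subinterval yields exactly $g_x<0$ on $(a,\rho(x))$ and $g_x>0$ on $(\rho(x),b)$, i.e.\ positive type. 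The negative-type case follows by reversing all signs.

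For the derivative criteria I would use a local Taylor expansion at $\rho(x)$ together with the same constant-sign propagation. When $T\in C^1$ and $x^*T'(\rho(x))x>0$, the expansion $g_x(\mu)=g_x'(\rho(x))(\mu-\rho(x))+o(\mu-\rho(x))$ shows $g_x$ takes the sign of $\mu-\rho(x)$ near $\rho(x)$; propagating this to all of $J$ gives positive type, proving the sufficient condition. For the equivalence under the additional $C^2$ hypothesis, only the direction ``positive type $\Rightarrow x^*T'(\rho(x))x>0$'' remains. Positive type makes the difference quotient $g_x(\mu)/(\mu-\rho(x))$ positive on both sides of $\rho(x)$, so $g_x'(\rho(x))\ge 0$; and if equality held, the second-order expansion $g_x(\mu)=\tfrac{1}{2}g_x''(\rho(x))(\mu-\rho(x))^2+o((\mu-\rho(x))^2)$ with $g_x''(\rho(x))=x^*T''(\rho(x))x\ne 0$ would force $g_x$ to keep one fixed sign on both sides of $\rho(x)$, contradicting the strict sign change. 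Hence $g_x'(\rho(x))>0$.

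The hard part is not any individual estimate --- each is one line of scalar calculus --- but rather the local-to-global passage, and this is precisely where the standing hypothesis that $\rho$ is defined on the whole of $D$ (equivalently, that $g_x$ has a unique zero in $J$) is indispensable: it is what excludes extraneous sign changes away from $\rho(x)$ and lets the infinitesimal behavior at the zero dictate the sign throughout $J$. The only bookkeeping to be careful about is that $T$ be continuously extended to the endpoints so that $T(a),T(b)$ and $g_x(a),g_x(b)$ are meaningful, and the use of ``$a,b$ not eigenvalues'' to pass from semidefinite to definite.
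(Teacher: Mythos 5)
Your proof is correct and complete. Note that this paper does not itself prove Proposition~\ref{intJdef}; it imports it from Part~I~\cite{Szyld.Xue.2014b}, so there is no in-paper proof to compare against. Your argument --- reducing everything to the scalar function $g_x(\mu)=x^*T(\mu)x$ with its unique zero at $\rho(x)$, using the intermediate value theorem to propagate local sign information globally, combining endpoint limits with nonsingularity of $T(a),T(b)$ to upgrade semidefiniteness to definiteness, and invoking first- and second-order Taylor expansions at $\rho(x)$ for the derivative criteria --- is precisely the standard route for results of this kind, and every step checks out, including the essential use of the uniqueness of the root of $g_x$ in $J$ (i.e.\ that $\rho$ is defined on all of $D$) to rule out extraneous sign changes.
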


On an interval of definite type, we have a variational characterization of eigenvalues of $T(\cdot)$ and the orthogonality of eigenvectors \cite{Hadeler.1968}\cite{Voss.2009}\cite{Voss.Werner.1982}. 

\begin{theorem}[Nonlinear Variational Principle]\label{nlvarprinciple}
Let $J \subset \mathbb{R}$ be finite and of definite type, and $T(\cdot)$ be continuously differentiable on $J$. Then there exist exactly $n$ eigenvalues $\{\lambda_k\}_{k=1}^n$ of $T(\cdot)$ on $J$ that satisfy a variational principle. Specifically, if $J$ is of positive type, then
\begin{eqnarray}\label{minmaxfm}
\lambda_k = \min\{\max\{\rho(x)\,|\,x \in S, x \ne 0\}\,|\,\mathrm{dim}(S)=k\}\quad \mbox{ and }\\ \nonumber
\lambda_k = \max\{\min\{\rho(x)\,|\,x \in S, x \ne 0\}\,|\,\mathrm{dim}(S)=n-k+1\}; \nonumber
\end{eqnarray} 
if $J$ is of negative type, then
\begin{eqnarray}\label{maxminfm}
\lambda_k = \max\{\min\{\rho(x)\,|\,x \in S, x \ne 0\}\,|\,\mathrm{dim}(S)=k\}\quad \mbox{ and }\\ \nonumber
\lambda_k = \min\{\max\{\rho(x)\,|\,x \in S, x \ne 0\}\,|\,\mathrm{dim}(S)=n-k+1\}. \nonumber
\end{eqnarray}
Moreover, there exist $n$ corresponding eigenvectors $\{v_k\}_{k=1}^n$ that form a basis of $\mathbb{C}^n$, and they are orthogonal with respect to the scalar-valued function $[\cdot,\cdot]$ defined as
\begin{eqnarray}\label{defdotprod}
[x,y] = \left\{\begin{array}{ll}y^*\big(T(\rho(y))-T(\rho(x))\big)x/\left(\rho(y)-\rho(x)\right)&\mbox{ if }\rho(x) \ne \rho(y)\\ y^*T'(\rho(x))x& \mbox{ if }\rho(x)=\rho(y)\end{array}\right..
\end{eqnarray} 
\end{theorem}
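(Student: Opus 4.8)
The plan is to reduce the nonlinear min-max to the classical Courant--Fischer characterization applied, for each fixed $\mu\in J$, to the Hermitian matrix $T(\mu)$, whose eigenvalues I denote $\tau_1(\mu)\ge\cdots\ge\tau_n(\mu)$. The one fact that makes this possible is a sign identity that comes directly from the definition of positive type: for every $x\neq 0$ and every $\mu\in J$ with $\mu\neq\rho(x)$, the product $(\mu-\rho(x))(x^*T(\mu)x)$ is positive, so $\operatorname{sign}(x^*T(\mu)x)=\operatorname{sign}(\mu-\rho(x))$, i.e.\ $x^*T(\mu)x>0\Leftrightarrow\mu>\rho(x)$, with $x^*T(\mu)x=0$ exactly when $\mu=\rho(x)$ (by the definition of the Rayleigh functional). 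I would treat only the positive-type case, since replacing $T(\cdot)$ by $-T(\cdot)$ interchanges positive and negative type and reverses the ordering of the $\tau_k(\mu)$, turning the formulas \eqref{minmaxfm} into \eqref{maxminfm}.

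For existence I would use continuity: since $T(\cdot)$ is continuous, each $\tau_k(\mu)$ varies continuously with $\mu$, and Proposition~\ref{intJdef} gives that $T(a)$ is negative definite while $T(b)$ is positive definite, so $\tau_k(a)<0<\tau_k(b)$ for every $k$. The intermediate value theorem then produces some $\lambda_k\in(a,b)$ with $\tau_k(\lambda_k)=0$, which is precisely the statement in Definition~\ref{ktheigenvalue} that $\lambda_k$ is the $k$-th eigenvalue. To identify this $\lambda_k$ with $\sigma_k:=\min\{\max\{\rho(x):x\in S,\,x\neq 0\}:\dim S=k\}$ I would run the usual two-sided dimension count. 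For $\sigma_k\le\lambda_k$, take $S_0$ to be the span of eigenvectors of $T(\lambda_k)$ for $\tau_1(\lambda_k),\dots,\tau_k(\lambda_k)$; then $x^*T(\lambda_k)x\ge\tau_k(\lambda_k)\|x\|^2=0$ on $S_0$, so the sign identity forces $\rho(x)\le\lambda_k$ throughout $S_0$. For $\sigma_k\ge\lambda_k$, intersect an arbitrary $k$-dimensional $S$ with the $(n-k+1)$-dimensional span of eigenvectors for $\tau_k(\lambda_k),\dots,\tau_n(\lambda_k)$; a dimension count yields a nonzero $x$ there with $x^*T(\lambda_k)x\le 0$, whence $\rho(x)\ge\lambda_k$. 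Since $\sigma_k$ is defined without reference to any particular zero of $\tau_k$, these inequalities hold for every such zero and force all of them to equal $\sigma_k$; this simultaneously gives uniqueness, hence exactly $n$ eigenvalues ordered $\lambda_1\le\cdots\le\lambda_n$. The max-min formula follows by the symmetric intersection argument.

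For the orthogonality with respect to \eqref{defdotprod}, the case of distinct eigenvalues is a one-line computation: when $\rho(v_j)=\lambda_j\neq\lambda_k=\rho(v_k)$, the numerator of $[v_j,v_k]$ is $v_k^*T(\lambda_k)v_j-v_k^*T(\lambda_j)v_j$, and both terms vanish because $T(\lambda_k)v_k=0$ with $T(\lambda_k)$ Hermitian, and $T(\lambda_j)v_j=0$. The basis property then follows from definiteness: positive type gives $[v_k,v_k]=v_k^*T'(\lambda_k)v_k>0$, so with the off-diagonal terms vanishing the Gram matrix of $\{v_k\}$ under $[\cdot,\cdot]$ is diagonal with positive entries, forcing linear independence and hence a basis of $\mathbb{C}^n$.

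The main obstacle I expect is the case of a multiple (semisimple) eigenvalue, where several $v_k$ share a common value $\lambda$ and the quotient form of $[\cdot,\cdot]$ degenerates into the derivative form $y^*T'(\lambda)x$. There one must show that a $[\cdot,\cdot]$-orthogonal basis of $\operatorname{null}T(\lambda)$ can be selected: the condition $x^*T'(\rho(x))x>0$ from positive type suggests that $T'(\lambda)$ restricted to this null space is a genuine Hermitian positive-definite form, so that Gram--Schmidt inside the eigenspace produces the required orthogonal vectors. Carefully verifying that this restriction is indeed definite (and reconciling the quotient and derivative forms in \eqref{defdotprod} by a limiting argument as $\rho(y)\to\rho(x)$) is the delicate step that I would expect to demand the most attention.
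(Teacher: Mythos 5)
The paper itself offers no proof of Theorem~\ref{nlvarprinciple}: it is quoted from the literature with citations to Hadeler, Voss--Werner, and Voss, so your proposal can only be judged on its own merits. The variational part of your argument is essentially the standard route taken in those references, and it is correct: the sign identity $\mathrm{sign}(x^*T(\mu)x)=\mathrm{sign}(\mu-\rho(x))$ coming from positive type, Courant--Fischer applied to each fixed $T(\mu)$, the intermediate value theorem on the continuous eigenvalue curves $\tau_k(\mu)$ (with Proposition~\ref{intJdef} supplying $T(a)$ negative definite and $T(b)$ positive definite), and the two-sided dimension count identifying each zero of $\tau_k$ with the subspace-theoretic quantity $\sigma_k$, which also yields uniqueness and hence exactly $n$ eigenvalues in the sense of Definition~\ref{ktheigenvalue}. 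The one-line orthogonality computation for eigenvectors belonging to \emph{distinct} eigenvalues is likewise correct.

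The genuine gap is in the completeness claim, i.e., that the $n$ eigenvectors form a basis of $\mathbb{C}^n$. Your argument --- off-diagonal brackets vanish, diagonal brackets are positive, hence the ``Gram matrix'' of $\{v_k\}$ is positive diagonal, hence the $v_k$ are independent --- implicitly treats $[\cdot,\cdot]$ as a sesquilinear form: to conclude independence you must expand $[\sum_j c_j v_j, v_k]$ linearly in the first argument, and the function \eqref{defdotprod} admits no such expansion, since $\rho$ enters it nonlinearly. The paper itself emphasizes exactly this obstruction in Section~5.4 (``$[\cdot,\cdot]$ is not bilinear in general, and thus the Gram--Schmidt procedure does not work''), which is why hard deflation is unavailable for these problems. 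For nonlinear eigenproblems, eigenvectors of distinct eigenvalues need not be linearly independent a priori; completeness in the overdamped setting is the genuinely nontrivial content of Hadeler's theorem and is proved by different means (induction/deflation over the eigenspaces of the matrices $T(\lambda_k)$), not by a Gram-matrix argument. A second, smaller defect: the strict positivity $v_k^*T'(\lambda_k)v_k>0$ you invoke does not follow from ``definite type $+$ continuous differentiability''; positive type only forces $v_k^*T'(\lambda_k)v_k\ge 0$, since the scalar function $\mu\mapsto v_k^*T(\mu)v_k$ may cross zero degenerately, and Proposition~\ref{intJdef} makes clear that upgrading this to strict inequality requires second-order hypotheses. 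Ironically, the step you flagged as most delicate --- selecting a $[\cdot,\cdot]$-orthogonal basis \emph{inside} a single eigenspace --- is the unproblematic one: restricted to $\mathrm{null}\,T(\lambda)$ the bracket \emph{is} the Hermitian sesquilinear form $y^*T'(\lambda)x$, which can always be diagonalized by a suitable basis regardless of definiteness. What your outline does not repair is the cross-eigenvalue linear independence, and that is precisely where the theorem's difficulty lies.
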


A natural corollary of the nonlinear variational principle \textup{(\ref{minmaxfm}) or (\ref{maxminfm})} is the nonlinear Cauchy interlacing theorem. 



\begin{theorem}[Nonlinear Cauchy interlacing theorem \cite{Szyld.Xue.2014b}]\label{cauchyilt}
Let $J=(a,b)$ be finite and of definite type, $T(\cdot)$ be continuously differentiable on $J$, and $U \in \mathbb{C}^{n \times m}$ contain $m$ linearly independent column vectors. Then the projected eigenproblem $U^*T(\nu)Uy = 0$ has exactly $m$ eigenpairs $\{(\nu_j,y_j)\}_{j=1}^m$ satisfying the nonlinear variational principle \textup{(\ref{minmaxfm}) or (\ref{maxminfm})}. In addition, if $J$ is of positive type, then $\lambda_j \leq \nu_j \leq \lambda_{n-m+j}$; if $J$ is of negative type, then $\lambda_{n-m+j} \leq \nu_j \leq \lambda_j$ \:$(1 \leq j \leq m)$.
\end{theorem}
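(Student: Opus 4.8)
The plan is to regard the projected problem as a nonlinear Hermitian eigenproblem in its own right and to apply Theorem~\ref{nlvarprinciple} to it, rather than re-deriving a variational principle from scratch. Write $\tilde{T}(\nu) = U^*T(\nu)U$, which maps $\nu \in J$ continuously to an $m \times m$ Hermitian matrix. First I would identify its Rayleigh functional. For any $y \in \mathbb{C}^m \setminus \{0\}$ the vector $Uy$ is nonzero because $U$ has full column rank, and the defining equation $y^*\tilde{T}(\tilde\rho(y))y = (Uy)^*T(\tilde\rho(y))(Uy) = 0$ is exactly the one defining $\rho(Uy)$. Hence the Rayleigh functional $\tilde\rho$ of $\tilde{T}$ is the restriction of $\rho$ to $\mathrm{range}(U)$, i.e. $\tilde\rho(y) = \rho(Uy)$. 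To see that $\tilde\rho$ is defined on all of $\mathbb{C}^m \setminus \{0\}$, note (taking positive type, say) that Proposition~\ref{intJdef} gives $T(a)$ negative definite and $T(b)$ positive definite, so $\tilde{T}(a)=U^*T(a)U$ and $\tilde{T}(b)=U^*T(b)U$ inherit the same definiteness; thus $\tilde{T}$ meets the hypotheses of Proposition~\ref{intJdef} on the full domain $\mathbb{C}^m\setminus\{0\}$.

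Second, I would verify that $J$ is of the same definite type for $\tilde{T}$ as for $T$. For positive type and any $y \neq 0$, $\mu \neq \tilde\rho(y)$, the sign condition reads $(\mu-\tilde\rho(y))(y^*\tilde{T}(\mu)y) = (\mu-\rho(Uy))\big((Uy)^*T(\mu)(Uy)\big)$, which is positive because $J$ is of positive type for $T$. Therefore $\tilde{T}$ satisfies the hypotheses of Theorem~\ref{nlvarprinciple}, so the projected problem $U^*T(\nu)Uy=0$ has exactly $m$ eigenvalues $\{\nu_j\}_{j=1}^m$ admitting the variational characterization, with associated eigenvectors $y_j$; the sought pairs are $(\nu_j, Uy_j)$.

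Third, I would deduce the interlacing bounds from the min-max and max-min formulas applied to $\tilde{T}$ together with those for $T$. Since $U$ is injective, the map $\tilde{S}\mapsto U\tilde{S}$ is a bijection between the $j$-dimensional subspaces of $\mathbb{C}^m$ and the $j$-dimensional subspaces of $\mathrm{range}(U)$, and the identity $\tilde\rho(y)=\rho(Uy)$ converts the inner optimization over $\tilde{S}$ into one over $U\tilde{S}$. Hence, in the positive-type case,
\[
\nu_j = \min_{\dim \tilde{S}=j}\ \max_{0\neq y\in \tilde{S}}\tilde\rho(y) = \min_{\substack{S\subseteq\mathrm{range}(U)\\ \dim S=j}}\ \max_{0\neq x\in S}\rho(x) \ \geq\ \lambda_j,
\]
because minimizing over the restricted family of subspaces confined to $\mathrm{range}(U)$ cannot drop below the minimum in (\ref{minmaxfm}) taken over all $j$-dimensional subspaces of $\mathbb{C}^n$. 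Applying the dual max-min formula with $\dim\tilde{S}=m-j+1$ yields in the same way $\nu_j \leq \lambda_{n-m+j}$, using $n-(n-m+j)+1 = m-j+1$; the negative-type case follows by reversing every inequality. The one step requiring genuine care—and the crux of the argument—is the reduction in the first two paragraphs: establishing $\tilde\rho(y)=\rho(Uy)$ and confirming that $\tilde{T}$ retains a globally defined Rayleigh functional of the same definite type, which is precisely what licenses invoking Theorem~\ref{nlvarprinciple} for $\tilde{T}$. Once that reduction is secured, the interlacing is the standard Courant--Fischer argument transported through the injective map $U$.
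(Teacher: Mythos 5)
Your proposal is correct and follows essentially the same route as the paper, which presents this theorem as a direct corollary of the nonlinear variational principle (Theorem~\ref{nlvarprinciple}), with the details deferred to Part I \cite{Szyld.Xue.2014b}: one checks that $\tilde{T}(\nu)=U^*T(\nu)U$ inherits a globally defined Rayleigh functional $\tilde\rho(y)=\rho(Uy)$ of the same definite type, and then compares the min-max/max-min characterizations over subspaces of $\mathrm{range}(U)$ with those over all of $\mathbb{C}^n$. Your handling of the two key points---the full-domain definition of $\tilde\rho$ via the definiteness of $T(a)$ and $T(b)$, and the dual use of both formulas in (\ref{minmaxfm}) to get the upper bound $\nu_j\leq\lambda_{n-m+j}$---is exactly what the argument requires.
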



From Definition \ref{ktheigenvalue} and the nonlinear variational principle \textup{(\ref{minmaxfm}) or (\ref{maxminfm})}, we see that the eigenvalues of the nonlinear Hermitian eigenproblem on an interval of definite type can be ordered in the same manner as for linear Hermitian eigenproblems. In particular, this ordering is needed when PLMR is used for computing many 
successive extreme eigenvalues, as described in Section 5.5. 


\section{The single-vector PLMR} In this section, we present a basic version of the PLMR method 
for computing an interior eigenvalue and the associated eigenvector of the nonlinear Hermitian eigenproblem.
We discuss the main building blocks of the method, including development of the search subspace, 
preconditioning, and extraction of the approximate eigenpair.

\subsection{Development of the search subspace}  Assume that $\lambda$ is a unique distinct
eigenvalue of $T(\cdot)$ closest to $\sigma \in J$, 
and let $v$ be a corresponding eigenvector. Suppose that in the $k$-th iteration we have an approximate eigenvector $x_k$. Our goal here is to develop a search subspace $\mathcal{U}_k$, from which a more accurate 
eigenvector approximation can be extracted. 

We start by reviewing the search subspace constructed within a variant of the PCG method, 
the Locally Optimal Preconditioned Conjugate Gradient (LOPCG) algorithm~\cite{Szyld.Xue.2014b}, 
for computing the lowest eigenvalue 
of $T(\cdot)$. 
%
Given the current eigenvector approximation $x_k$, LOPCG defines the search subspace as 
$$\mathcal{U}_k^{\mathrm{LOPCG}}=\mathrm{span}\{x_k, M^{-1}\nabla \rho(x_k), p_{k-1}\},$$
where 
$$\nabla \rho(x_k) = -\frac{2}{x^* T'\left(\rho(x_k)\right)x_k} T\left(\rho(x_k)\right)x_k$$ 
is the gradient of the Rayleigh functional $\rho(\cdot)$ at $x_k$ (see \cite[Proposition 3.1]{Szyld.Xue.2014b}), 
$M \approx T(\sigma)$ is a preconditioner, and $p_{k-1}$ is the  search direction connecting $x_{k-1}$ and $x_k$ 
($p_{-1} = 0$). 
Note that $T\left(\rho(x_k)\right)x_k$ defines the residual of the eigenproblem,
which is parallel to the gradient~$\nabla \rho(x_k)$. 

For the sake of simplicity, we let $\rho_k=\rho(x_k)$ when there is no danger of confusion. The above search subspace can then be written as 
\begin{equation}\label{eq:lopcg}
\mathcal{U}_k^{\mathrm{LOPCG}}=\mathcal{K}_2\left(M^{-1}T(\rho_k),x_k\right)+\mathrm{span}\{p_{k-1}\}, 
\end{equation}
where $\mathcal{K}_m(A,b) = \mathrm{span}\left\{b, Ab, \ldots, A^{m-1}b\right\}$
denotes an $m$-dimensional Krylov subspace~\cite{Saad.book.2003}. This three-dimensional search subspace, unfortunately, is not effective for computing interior eigevalues. 
The main issue is that the convergence towards these eigenvalues can be fairly slow, 
and thus a search subspace of a larger dimension is needed to stabilize and accelerate the convergence. This is especially evident if the preconditioner 
$M$ is not very strong. 

In order to properly enlarge the LOPCG subspace~\eqref{eq:lopcg}, 
we consider the subspace 
\begin{eqnarray}\label{spacePLMR}
\mathcal{U}_k=\mathcal{K}_{m+1}(M^{-1}T(\rho_k),x_k)+\mathrm{span}\{p_{k-1}\},
\end{eqnarray}
where $\mathcal{K}_{m+1}(M^{-1}T(\rho_k),x_k)$ generalizes the limit space 
$\mathcal{K}_{m+1}\left(M^{-1}(A-\rho_k B),x_k\right)$ of the Generalized Davidson method that restarts every $m$ steps \cite{Ovtchinnikov.2003}.
The augmentation of this Krylov subspace with the search direction $p_{k-1}$ is expected to accelerate the convergence 
as it does for PCG methods. 

\subsection{Stabilization of preconditioning} 
A drawback of the search subspace~(\ref{spacePLMR}) is that it can potentially suffer from numerical instabilities. 
To see this, let
$M = T(\sigma)$ and $\rho_k = \sigma$. Then $M^{-1}T(\rho_k) = I$, and the search subspace 
degenerates 
to $\mathrm{span}\left\{x_k,p_{k-1}\right\}$. Therefore, an algorithm based on the search subspace~(\ref{spacePLMR}) stagnates, i.e., cannot generate any improvement in eigenvector approximation. In practice, stagnation could arise whenever $M$ is a good approximation to $T(\sigma)$ and $\rho_k$ is sufficiently close to $\sigma$. The same issue is known for the Davidson type methods for linear eigenproblems, which has been fixed by the Jacobi-Davidson (JD) algorithm \cite{Fokkema.etal.1998}, \cite{Sleijpen.Vorst.1996}. 


A key ingredient contributing to the robustness of the JD methods is the modification of the preconditioning procedure in such a way that it is performed through solution of a correction equation rather than a direct application of $M^{-1}$. In particular, for nonlinear eigenproblems, 
the correction equation  of the basic JD method\footnote{The basic variant of JD forms the new approximation as $x_{k+1}= x_k +\Delta x_k$, where $\Delta x_k$ is an approximate solution to the correction equation; no subspace expansion and projection is involved. It is also referred to as single-vector JD or simplified JD in literature; see, e.g., \cite{Freitag.Spence.2008}, \cite{Hochstenbach.Notay.2009}.} is of the form 
\begin{eqnarray}\label{JDcrteqn}
\Pi_1 T(\rho_k) \Pi_2 \Delta x_k = -T(\rho_k)x_k,
\end{eqnarray}
where $\Pi_1$ and $\Pi_2$ are properly chosen projectors, such that $T(\rho_k)x_k \in \mathrm{range}(\Pi_1)$ and $\Pi_2 \Delta x_k = \Delta x_k$; see, e.g., \cite{Betcke.Voss.2004}, \cite[Chapter 6.2]{Schreiber.thesis}, \cite{Szyld.Xue.2013}. For Hermitian $T(\cdot)$, one can choose 
\begin{eqnarray}\label{JDprj}
\Pi_1 = I-\frac{T'(\rho_k)x_kx_k^*}{x_k^*T'(\rho_k)x_k}\quad\mbox{ and }\quad \Pi_2 = \Pi_1^*=I-\frac{x_kx_k^*T'(\rho_k)}{x_k^*T'(\rho_k)x_k}.
\end{eqnarray} 
In this case, the coefficient matrix in (\ref{JDcrteqn}) is Hermitian, and thus efficient preconditioned linear solvers such as MINRES~\cite{Paige.Saunders:75} or SQMR \cite{Freund.Nachtigal.1995} can be applied. 
It can be shown that the exact solution $\Delta x_k$ of (\ref{JDcrteqn}) with the projectors defined in (\ref{JDprj}) satisfies $$x_k+\Delta x_k = \frac{x_k^*T'(\rho_k)x_k}{x_k^*T'(\rho_k)T(\rho_k)^{-1}T'(\rho_k)x_k}T(\rho_k)^{-1}T'(\rho_k)x_k,$$ which is parallel to the new iterate $x_{k+1}=T(\rho_k)^{-1}T'(\rho_k)x_k$ obtained from the Rayleigh functional iteration; see \cite[Chapter 4.3]{Schreiber.thesis}, \cite{Szyld.Xue.2013}.

Motivated by the structure of the coefficient matrix in the correction equation~(\ref{JDcrteqn}), we modify 
the preconditioner $M \approx T(\sigma)$ by multiplying it with the projectors in~(\ref{JDprj}).
This replaces $M$ with a stabilized preconditioner
\begin{eqnarray}\label{pcdMp}
M_{\Pi} = \Pi M \Pi^*,
\end{eqnarray}
where $\Pi = \Pi_1$, as defined in (\ref{JDprj}). Thus, the preconditioner is now applied to a vector through $M_{\Pi}^{\dagger} \equiv \Pi M^{-1} \Pi^*$ rather than~$M^{-1}$. 


The precise formula that describes the action of~$M_{\Pi}^{\dagger}$ on a given vector can be derived as follows. We consider vectors $y$ and $b$ such that $b = M_{\Pi}y=\Pi M \Pi^* y$, where $b \in \mathrm{range}(\Pi)$, and $y \perp T'(\rho_k)x_k$, i.e., $\Pi^*y = y$. These assumptions are standard in the preconditioning for JD, identical to those used in \cite{Szyld.Xue.2013}, \cite{Voss.2014}. Equivalently, 
\begin{eqnarray}\nonumber
b = \left(I-\frac{T'(\rho_k)x_kx_k^*}{x_k^*T'(\rho_k)x_k}\right)M y = My-T'(\rho_k)x_k \frac{x_k^*My}{x_k^*T'(\rho_k)x_k},
\end{eqnarray}
and it follows that
\begin{eqnarray}\label{prcd_drv1}
y = M^{-1}b + M^{-1}T'(\rho_k)x_k \frac{x_k^*My}{x_k^*T'(\rho_k)x_k}. 
\end{eqnarray}
The orthogonality $y \perp T'(\rho_k)x_k$ implies that
\begin{eqnarray}\nonumber
0 = x_k^*T'(\rho_k)y = x_k^*T'(\rho_k)M^{-1}b+x_k^*T'(\rho_k)M^{-1}T'(\rho_k)x_k\frac{x_k^*My}{x_k^*T'(\rho_k)x_k},
\end{eqnarray}
from which we have
\begin{eqnarray}\label{prcd_drv2}
\frac{x_k^*My}{x_k^*T'(\rho_k)x_k}=-\frac{x_k^*T'(\rho_k)M^{-1}b}{x_k^*T'(\rho_k)M^{-1}T'(\rho_k)x_k}.
\end{eqnarray}
Thus, after substituting \eqref{prcd_drv2} into \eqref{prcd_drv1}, we obtain
\begin{eqnarray}\nonumber
y = M_{\Pi}^{\dagger}b &=& M^{-1}b-\frac{x_k^*T'(\rho_k)M^{-1}b}{x_k^*T'(\rho_k)M^{-1}T'(\rho_k)x_k}M^{-1}T'(\rho_k)x_k \\ \label{pcdmvp0}
&=&\left(I-\frac{M^{-1}T'(\rho_k)x_k\,x_k^*T'(\rho_k)}{x_k^*T'(\rho_k)M^{-1}T'(\rho_k)x_k}\right)M^{-1}b.
\end{eqnarray}

In contrast to $M^{-1}$, the operator $M_{\Pi}^{\dagger}$ does not cancel out with 
the matrix $T(\rho_k)$ and, instead, applies~$M^{-1}$ to~$T'(\rho_k)x_k$, 
magnifying the desired eigenvector component. 
Note that $M^{-1}T'(\rho_k)x_k$ in~\eqref{pcdmvp0} can be computed only once and further used to evaluate vectors of the form $M_{\Pi}^{\dagger}T(\rho_k)z$, for any $T(\rho_k)z \in \mathrm{range}(\Pi)$. This observation is needed for the construction of the PLMR search subspace. 

%

Finally, given a stabilized preconditioner~\eqref{pcdMp}, whose action on a vector is expressed in~\eqref{pcdmvp0},
we define the PLMR search subspace~as
\begin{eqnarray}\label{spacePLMR2}
\mathcal{U}_k^{\mbox{\scriptsize{PLMR($m$)}}}=\mathcal{K}_{m+1}(M_{\Pi}^{\dagger}T(\rho_k),x_k)+\mathrm{span}\{p_{k-1}\}, 
\end{eqnarray}
which is exactly (\ref{spacePLMR}) with $M$ replaced by $M_{\Pi}$.
Our numerical experience confirms that the PLMR version built upon (\ref{spacePLMR2}) 
indeed
tends to be significantly more robust than that based on (\ref{spacePLMR}).
Therefore, throughout, we only use (\ref{spacePLMR2}), constructed with the stabilized preconditioner, as the search subspace for the PLMR algorithm.  

\subsection{Subspace projection and extraction} Given the PLMR search subspace (\ref{spacePLMR2}), we now consider the projection of the original eigenproblem onto this subspace, and describe the extraction of a new eigenvector approximation. 

Similar to the linear setting, the standard Rayleigh-Ritz procedure is ideal in preserving symmetry and is most suitable for computing extreme eigenvalues, but it generally exhibits very slow convergence towards interior eigenvalues. As a remedy, the harmonic Rayleigh-Ritz scheme could be used. However, the main 
disadvantage of this approach is that it does not preserve symmetry of the original eigenproblem. In general, algorithms for solving interior eigenvalues of nonlinear eigenproblems without symmetry are significantly more complicated and tend to be less robust than those for solving nonlinear Hermitian eigenproblems admitting a variational principle; see, e.g.,~\cite{Voss.2010}\cite{Voss.2014} and references therein. In addition, solving nonlinear projected eigenproblems that fail to preserve the structure (non-Hermitian in our case) 
can lead to a significant loss of accuracy in the final eigenpair approximations.

To resolve this issue, we propose using the standard Rayleigh-Ritz projection, followed by a procedure to detect and discard spurious Ritz values, and a refinement step 
to further improve the quality of eigenvector approximation. 
Note that spurious Ritz values are Ritz values close to the desired shift $\sigma$, but they correspond to poor eigenvector approximations, typically given by linear combinations of eigenvectors associated with eigenvalues outside the interval of interest. They often arise frequently when interior eigenvalues are sought.

Specifically, let $U_k \in \mathbb{C}^{n \times (m+2)}$ contain orthonormal basis vectors of the search subspace~\eqref{spacePLMR2}.
The Rayleigh-Ritz scheme then leads to the projected Hermitian eigenproblem 
\begin{equation}\label{eq:rr}
U_k^*T(\nu)U_k y = 0,
\end{equation}
which also admits a variational characterization of its eigenvalues (i.e., of the Ritz values) 
satisfying the nonlinear Cauchy interlacing theorem (Theorem \ref{cauchyilt}). The projected eigenproblem can be solved, e.g., by PCG methods based on the variational principle \cite{Szyld.Xue.2014b}.

After forming the projected problem~\eqref{eq:rr}, the following approach is used to obtain an approximate eigenpair. 
First, we solve~\eqref{eq:rr} for the $r$ successive Ritz values $\nu_{1},\ldots, \nu_{r}$ closest to $\sigma$ and the associated eigenvectors $y_1,\ldots,y_r$, and compute the corresponding Ritz vectors $z_i = U_k y_i$ ($1\leq i \leq r$). We then order $\{\nu_i\}$ according to the residual norms of the respective Ritz pairs, such that for any $i$, $j$ with $1 \leq i<j \leq r$,
\begin{equation}\label{eq:sort}
\frac{\big\|T\left(\nu_i\right)z_i\big\|_2}{\big\|T\left(\nu_i\right)\big\|_F\big\|z_i\big\|_2} \leq \frac{\big\|T\left(\nu_j\right)z_j\big\|_2}{\big\|T\left(\nu_j\right)\big\|_F\big\|z_j\big\|_2}.
\end{equation}
Next, we take $s$ Ritz pairs $(\nu_i, z_i)$ of minimal eigenresidual norm from the $r$ candidates, 
and choose the Ritz value, say $\nu_{\ell}$ ($1 \leq \ell \leq s$), that is closest to $\sigma$. 

The motivation of the above approach is well founded. We first find a relatively large number, $r$, of Ritz values near $\sigma$, so that a good eigenvalue approximation is included in this set. Other Ritz values, not selected, are relatively far from $\sigma$, and cannot represent accurate approximations to the desired eigenvalue. The ordering of Ritz pairs in terms of eigenresidual norm tends to put promising Ritz pairs to the front of the ordered set 
and others to the end. This step aims to filter out spurious Ritz pairs, i.e., those with Ritz values close to $\sigma$ but with 
large eigenresidual norms.
Such pairs commonly arise in the Rayleigh-Ritz projection for computing interior eigenvalues, and are
excluded from further consideration due to the proposed ordering. 
As a result, we have a fairly good chance that a promising Ritz pair is included in the set of $s$ candidates with minimal eigenresidual norm. Finally, the Ritz value $\nu_\ell$ closest to $\sigma$ is selected from the $s$ candidates.
In our implementation, by default, $r=\min(m+1,\max(5,\ceil{(m+1)/2}))$, and $s = 2$.

\begin{algorithm}[htbp]
\begin{small}
\begin{center}
  \begin{minipage}{5in}
\begin{tabular}{p{0.5in}p{4.1in}}
{\bf Input}:  &  \begin{minipage}[t]{4.0in}
An initial eigenvector approximation $x_0 \in \mathbb{C}^{n}\setminus\{0\}$, a preconditioner $M$, a shift $\sigma \in \mathbb{R}$, and integers $r,\,s > 0$;
                  \end{minipage} \\
{\bf Output}:  &  \begin{minipage}[t]{4.0in}
                 An eigenpair $(\lambda,v)$, where $\lambda$ is the eigenvalue of $T(\cdot)$ closest to $\sigma$;
                  \end{minipage}
\end{tabular}
\begin{algorithmic}[1]
\STATE Set $k \gets 0$ and $p_{-1} \gets [\,]$. Compute $\rho_0 = \rho(x_0)$. 
\WHILE {convergence not reached}
  \STATE If $k > 0$, then $p_{k-1} \gets x_k-x_{k-1}$. 
  \STATE Compute an orthonormal basis $U_k$ of the search subspace~\eqref{spacePLMR2}, where the action of the    
   preconditioner $M_{\Pi}$ is given by~\eqref{pcdmvp0}.
  \STATE Solve the Rayleigh-Ritz projected eigenproblem~\eqref{eq:rr}. 
  \STATE Select the $r$ Ritz values closest to $\sigma$, and sort the corresponding Ritz pairs according to  
         their eigenresidual norms~\eqref{eq:sort}. Then choose the $s$ Ritz pairs with minimal eigenresidual,  
          and use them to identify the Ritz value $\nu_{\ell}$ closest to $\sigma$. 
  \STATE  Compute the right singular vector $y_{MR}$ associated with the smallest
          singular value of the matrix $T(\nu_{\ell})U_k$. 

   \STATE Set $x_{k+1} \gets U_k y_{MR}$; $\rho_{k+1} \gets \rho(x_{k+1})$. 
   \STATE   Normalize $x_{k+1}$, such that $x_{k+1}^*T'(\rho_{k+1})x_{k+1}=1$.
   \STATE $k \gets k + 1$. Check convergence of $(\rho_k,x_k)$. 
\ENDWHILE
\STATE Set $\lambda \gets \rho_{k}$; $v \gets x_{k}$. Return $(\lambda, v)$.
\end{algorithmic}
\end{minipage}
\end{center}
\end{small}
  \caption{The PLMR($m$) algorithm for a Hermitian eigenproblem $T(\lambda)v = 0$}
  \label{alg:plmr}
\end{algorithm}

By construction, the selected interior Ritz pair $(\nu_\ell,z_\ell)$ 
has a reasonably small eigenresidual. However, in most cases, 
it can be further significantly improved. 
To this end, we refine the Ritz vector by substituting it with 
a new eigenvector approximation that delivers a minimal residual.  
That is, we solve
\begin{equation}\label{eq:ref}
{y}_{MR} = \mathrm{argmin}_{\|y\|=1}\left\|T(\nu_{\ell})U_k y\right\|_2,
\end{equation}
and set the new iterate to $x_{k+1}= U_k {y}_{MR}$. The corresponding eigenvalue
approximation is then given by the Rayleigh functional $\rho_{k+1}$ evaluated at $x_{k+1}$.

Problem~\eqref{eq:ref} can be approached by finding the smallest singular value of the matrix $T(\nu_{\ell})U_k \in \mathbb{C}^{n \times (m+2)}$ and its right singular vector, or equivalently by solving the linear eigenproblem $U_k^*T(\nu_{\ell})^*T(\nu_{\ell})U_ky = \eta y$ for the eigenvector corresponding to the smallest eigenvalue. 
Note that the entire strategy described above is a direct generalization of the 
refinement procedure of~\cite{Jia.1997} to the case of nonlinear eigenproblems.
As we shall see in Section~\ref{sect_num_exp}, step~\eqref{eq:ref} indeed turns out to be crucial for stabilizing the convergence of PLMR.
The whole PLMR scheme is summarized in Algorithm~1.

\section{Local convergence analysis}\label{sect_ca}
In this section, we provide an analysis of the PLMR algorithm, explaining the conditions that guarantee its convergence and how rapidly it may converge. Our analysis is based on a close connection between the search subspaces developed by PLMR and the basic JD method, and on certain assumptions about the performance of the subspace projection and extraction. 

Specifically, let $(\rho_k,x_k)$ be the current approximation to the desired eigenpair $(\lambda,v)$, where $\lambda$ is the eigenvalue of $T(\cdot)$ closest to $\sigma$. Recall from (\ref{JDcrteqn}) the basic JD correction equation $$\Pi T(\rho_k)\Pi^* \Delta x_k = -T(\rho_k) x_k,$$ and assume that the preconditioner (\ref{pcdMp}) is used for a Krylov subspace method with right-preconditioning to solve this equation. In iteration $m$, the Krylov subspace developed for the preconditioned linear system is thus $$\mathcal{K}_{m}\left(\Pi T(\rho_k) \Pi^* M_{\Pi}^{\dagger}, T(\rho_k)x_k\right),$$ where $\Pi = I-\frac{T'(\rho_k)x_kx_k^*}{x_k^*T'(\rho_k)x_k}$. Due to the right-preconditioning, the approximate solution $\Delta x_k$ of the original JD correction equation \eqref{JDcrteqn} lies in $M_{\Pi}^{\dagger}\mathcal{K}_{m}\left(\Pi T(\rho_k)\Pi^* M_{\Pi}^{\dagger},T(\rho_k)x_k\right)$, and therefore the new approximation $x_{k+1}=x_k +\Delta x_k$ lies in 
\begin{eqnarray}\label{Kspace}
&& \mathrm{span}\{x_k\} +M_{\Pi}^{\dagger}\mathcal{K}_{m}\left(\Pi T(\rho_k)\Pi^* M_{\Pi}^{\dagger},T(\rho_k)x_k\right) \\ \nonumber
&=& \mathrm{span} \left\{x_k,M_{\Pi}^{\dagger}T(\rho_k)x_k, M_{\Pi}^{\dagger}\Pi T(\rho_k)\Pi^* M_{\Pi}^{\dagger}T(\rho_k)x_k,\ldots, M_{\Pi}^{\dagger}\left(\Pi T(\rho_k)\Pi^* M_{\Pi}^{\dagger}\right)^{m-1}T(\rho_k)x_k\right\}\\ \nonumber
&=&\mathrm{span} \left\{x_k,M_{\Pi}^{\dagger}T(\rho_k)x_k,\left(M_{\Pi}^{\dagger} T(\rho_k) \right)^{2}x_k\ldots, \left(M_{\Pi}^{\dagger} T(\rho_k) \right)^{m}x_k\right\}= \mathcal{K}_{m+1}\left(M_{\Pi}^{\dagger} T(\rho_k), x_k\right), \nonumber
\end{eqnarray}
where we used the identity $$M_{\Pi}^{\dagger}\Pi=\Pi^*M_{\Pi}^{\dagger}=M_{\Pi}^{\dagger},$$ that can be derived from (\ref{JDprj}) and (\ref{pcdmvp0}) without much difficulty. Clearly, the subspace \eqref{Kspace} is a proper subspace of the PLMR($m$) search subspace \eqref{spacePLMR2}, an augmented version of \eqref{Kspace}. This observation is summarized in the following lemma.

\begin{lemma}\label{lemmaJDPLMR}
Given the same current iterate $x_k$, basic \textup{JD} with correction equation \eqref{JDcrteqn} delivers a new eigenvector approximation $x^{\mathrm{JD}}_{k+1}$ lying in the search subspace where \textup{PLMR($m$)} extracts its new iterate $x^{\mathrm{PLMR}}_{k+1}$.
\end{lemma}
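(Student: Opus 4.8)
The plan is to verify directly the containment $x^{\mathrm{JD}}_{k+1} \in \mathcal{U}_k^{\mathrm{PLMR}(m)}$, where $\mathcal{U}_k^{\mathrm{PLMR}(m)}$ is the augmented Krylov subspace of \eqref{spacePLMR2}, by tracing where a right-preconditioned Krylov solver places the JD correction $\Delta x_k$. Recall that basic JD forms $x^{\mathrm{JD}}_{k+1} = x_k + \Delta x_k$, with $\Delta x_k$ the $m$-step Krylov approximation to the correction equation \eqref{JDcrteqn} using right-preconditioning by $M_{\Pi}^{\dagger}$. Writing $\Delta x_k = M_{\Pi}^{\dagger} w$ and starting from the zero initial guess, the residual of the preconditioned system is $-T(\rho_k)x_k$, so that $w \in \mathcal{K}_{m}(\Pi T(\rho_k)\Pi^* M_{\Pi}^{\dagger}, T(\rho_k)x_k)$ and hence $\Delta x_k \in M_{\Pi}^{\dagger}\mathcal{K}_{m}(\Pi T(\rho_k)\Pi^* M_{\Pi}^{\dagger}, T(\rho_k)x_k)$. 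Consequently $x^{\mathrm{JD}}_{k+1}$ lies in the space displayed on the left-hand side of \eqref{Kspace}.

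The crux is then the algebraic simplification carried out in \eqref{Kspace}. Its key ingredient is the identity $M_{\Pi}^{\dagger}\Pi = \Pi^* M_{\Pi}^{\dagger} = M_{\Pi}^{\dagger}$, which I would verify from the definition \eqref{JDprj} of $\Pi$ and the explicit formula \eqref{pcdmvp0} for $M_{\Pi}^{\dagger}$: since $M_{\Pi}^{\dagger}$ already carries the left projector factor built from the same vector $T'(\rho_k)x_k$, left-multiplication by $\Pi^*$ and right-multiplication by $\Pi$ are absorbed. Using this identity I would show, by induction on $j$, that $M_{\Pi}^{\dagger}(\Pi T(\rho_k)\Pi^* M_{\Pi}^{\dagger})^{j} T(\rho_k)x_k = (M_{\Pi}^{\dagger}T(\rho_k))^{j+1}x_k$ for $0 \le j \le m-1$. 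Adjoining $x_k$ itself, this collapses $\mathrm{span}\{x_k\} + M_{\Pi}^{\dagger}\mathcal{K}_{m}(\Pi T(\rho_k)\Pi^* M_{\Pi}^{\dagger}, T(\rho_k)x_k)$ exactly onto $\mathcal{K}_{m+1}(M_{\Pi}^{\dagger}T(\rho_k), x_k)$.

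To finish, since $\mathcal{K}_{m+1}(M_{\Pi}^{\dagger}T(\rho_k), x_k) \subseteq \mathcal{K}_{m+1}(M_{\Pi}^{\dagger}T(\rho_k), x_k) + \mathrm{span}\{p_{k-1}\} = \mathcal{U}_k^{\mathrm{PLMR}(m)}$, I would conclude that $x^{\mathrm{JD}}_{k+1} \in \mathcal{U}_k^{\mathrm{PLMR}(m)}$, i.e., the JD iterate lies in the very subspace from which PLMR$(m)$ extracts $x^{\mathrm{PLMR}}_{k+1}$. The main obstacle is the projector identity together with the telescoping step: one must carefully distinguish $\Pi$ from $\Pi^*$, and confirm that under right-preconditioning the generating vector of the Krylov sequence is $T(\rho_k)x_k$ rather than $M_{\Pi}^{\dagger}T(\rho_k)x_k$, so that the exponents align to yield precisely an $(m{+}1)$-dimensional Krylov space and nothing larger. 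Beyond that, the argument is the bookkeeping already essentially displayed in \eqref{Kspace}.
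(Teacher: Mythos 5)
Your proposal is correct and follows essentially the same route as the paper: it traces the right-preconditioned Krylov solve of the JD correction equation, applies the identity $M_{\Pi}^{\dagger}\Pi=\Pi^*M_{\Pi}^{\dagger}=M_{\Pi}^{\dagger}$ to collapse $\mathrm{span}\{x_k\}+M_{\Pi}^{\dagger}\mathcal{K}_{m}\left(\Pi T(\rho_k)\Pi^*M_{\Pi}^{\dagger},T(\rho_k)x_k\right)$ onto $\mathcal{K}_{m+1}\left(M_{\Pi}^{\dagger}T(\rho_k),x_k\right)$, and observes that this space is contained in the PLMR($m$) search subspace \eqref{spacePLMR2}, exactly as in the derivation \eqref{Kspace}. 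Your explicit induction on the exponent $j$ merely spells out the telescoping step that the paper displays without comment.
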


Consequently, if $x^{\mathrm{PLMR}}_{k+1}$ is of the same quality as $x^{\mathrm{JD}}_{k+1}$, then the convergence of PLMR can be established as a corollary of the local convergence theorem of basic JD, already shown in our problem setting \cite[Theorems 7, 11]{Szyld.Xue.2013}. Whether the new iterates of the two methods are comparable in quality depends on the approximation properties of the refined Rayleigh-Ritz projection used in PLMR, which have been established for standard linear eigenproblems; see, e.g., \cite[Chapter 4.4]{Stewart.book.2001} and references therein. 

Let $v$ be the desired eigenvector, and $U$ contain basis vectors for the eigensolver search subspace. Roughly speaking, under certain typically non-stringent conditions, $\angle(v,Uy)$, the angle between $v$ and the corresponding Ritz or refined vector $Uy$, is proportional to $\angle(v,\mathrm{range}(U))$. For nonlinear eigenproblems $T(\lambda)v=0$, a complete study of similar properties of these techniques is beyond the scope of this paper. Nevertheless, in our numerical experiments, we find that $\angle(v,Uy)$ is also proportional to $\angle(v,\mathrm{range}(U))$  consistently. Therefore, we assume that this property holds, and give a major local convergence result of PLMR.
\begin{theorem}
Let $(\lambda, v)$ be a simple eigenpair of the nonlinear Hermitian eigenproblem $T(\lambda)v=0$, where $v$ is normalized such that $v^*T'(\lambda)v=1$. Assume that there exist a $\delta > 0$ and a corresponding $\xi > 0$, such that for any eigenpair approximation $(\mu,x)$ sufficiently close to $(\lambda,v)$, namely, with $\left\|\left[\!\begin{array}{c}x \\ \mu\end{array}\!\right]\!-\!\left[\!\begin{array}{c}v \\ \lambda\end{array}\!\right]\right\| \leq \delta$, we have $\|T'(\mu)x\| \leq \xi$. Let $x_k = \gamma_k(c_k v+s_k g_k)$ be the eigenvector approximation obtained in the $k$-th iteration of \textup{PLMR}, where $\gamma_k$, $c_k$ and $s_k$ are the generalized norm of $x_k$, generalized cosine and sine of $\angle(x_k,v)$, respectively \textup{(}see Appendix\textup{)}. Suppose that $\angle(x_0,v)$ is sufficiently small, such that $\left\|\left[\!\begin{array}{c}x_0 \\ \rho(x_0)\end{array}\!\right]\!-\!\left[\!\begin{array}{c}v \\ \lambda\end{array}\!\right]\right\| \leq \delta$. For each $x_k$, assume that the refined  projection extracts a new eigenvector approximation $x_{k+1}$, such that $\sin \angle(v,x_{k+1}) \leq C \sin \angle (v, \mathcal{U}^{\mathrm{PLMR}}_k)$ for a small constant $C$ independent of $k$. Assume that the \textup{JD} correction equation \textup{(\ref{JDcrteqn})} is solved by right-preconditioned \textup{GMRES($m_k$)} with the preconditioner defined in \textup{(\ref{pcdMp})}. Let $\tau^{(\alpha)}_k = \tau^{(\alpha)}_0$ be a sufficiently small and fixed tolerance, and $\tau^{(\beta)}_k \leq C_{\beta} \frac{|s_k|}{|c_k|}$ and $\tau^{(\gamma)}_k \leq C_{\gamma} \frac{|s_k|^2}{|c_k|^2}$ be decreasing sequences of tolerances, where $C_\beta$ and $C_\gamma$ are sufficiently small constants independent of $k$. For each $k$, assume that $m_k$ is sufficiently large, such that one cycle of \textup{GMRES($m_k$)} delivers an approximate solution of \textup{(\ref{JDcrteqn})} satisfying the relative tolerance $\tau^{(\alpha)}_k$, $\tau^{(\beta)}_k$ or $\tau^{(\gamma)}_k$, respectively. Then \textup{PLMR($m_k$)} converges towards $(\lambda,v)$ at least linearly, quadratically or cubically, respectively. 
\end{theorem}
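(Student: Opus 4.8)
The plan is to reduce the convergence of PLMR to the already-established local convergence theory of the basic JD method, using Lemma~\ref{lemmaJDPLMR} as the bridge. Fix an iteration index $k$ and let $x_k$ denote the current PLMR iterate. I would consider the companion basic-JD iterate $x_{k+1}^{\mathrm{JD}}$ obtained from this \emph{same} $x_k$ by applying one cycle of right-preconditioned GMRES($m_k$) to the correction equation~(\ref{JDcrteqn}) and setting $x_{k+1}^{\mathrm{JD}}=x_k+\Delta x_k$. By Lemma~\ref{lemmaJDPLMR}, $x_{k+1}^{\mathrm{JD}}\in\mathcal{U}_k^{\mathrm{PLMR}}$, and since the angle from $v$ to a subspace is the minimum of the angles from $v$ to its elements,
\[
\sin\angle(v,\mathcal{U}_k^{\mathrm{PLMR}}) \leq \sin\angle(v,x_{k+1}^{\mathrm{JD}}).
\]
Combining this with the refined-projection assumption $\sin\angle(v,x_{k+1}^{\mathrm{PLMR}})\leq C\,\sin\angle(v,\mathcal{U}_k^{\mathrm{PLMR}})$ yields the master inequality
\[
\sin\angle(v,x_{k+1}^{\mathrm{PLMR}}) \leq C\,\sin\angle(v,x_{k+1}^{\mathrm{JD}}),
\]
which controls the PLMR error by a fixed multiple of the JD error issued from the identical starting point.

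Next I would invoke the basic-JD local convergence results \cite[Theorems 7, 11]{Szyld.Xue.2013}, which apply verbatim because the companion iterate is generated exactly by the inexact basic JD those theorems analyze; the hypothesis that $m_k$ is large enough for one cycle of GMRES($m_k$) to meet the prescribed relative tolerance reduces the GMRES-based scheme to the abstract inexact JD they cover, and the boundedness assumption $\|T'(\mu)x\|\leq\xi$ supplies the uniform constants those theorems require. Under the fixed tolerance $\tau^{(\alpha)}_k=\tau^{(\alpha)}_0$ they give a contraction $\sin\angle(v,x_{k+1}^{\mathrm{JD}})\leq\theta_\alpha\sin\angle(v,x_k)$ with $\theta_\alpha<1$; under $\tau^{(\beta)}_k\leq C_\beta|s_k|/|c_k|$ a quadratic bound $\sin\angle(v,x_{k+1}^{\mathrm{JD}})\leq\theta_\beta\sin^2\angle(v,x_k)$; and under $\tau^{(\gamma)}_k\leq C_\gamma|s_k|^2/|c_k|^2$ a cubic bound $\sin\angle(v,x_{k+1}^{\mathrm{JD}})\leq\theta_\gamma\sin^3\angle(v,x_k)$. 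Substituting each into the master inequality produces $\sin\angle(v,x_{k+1}^{\mathrm{PLMR}})\leq C\theta_\alpha\sin\angle(v,x_k)$, $\leq C\theta_\beta\sin^2\angle(v,x_k)$, and $\leq C\theta_\gamma\sin^3\angle(v,x_k)$, respectively, which are exactly the claimed linear, quadratic, and cubic recurrences for the PLMR iterates; eigenvalue convergence at the stated order then follows from continuity of the Rayleigh functional $\rho(\cdot)$, since $\rho_k=\rho(x_k)\to\lambda$ as $\sin\angle(v,x_k)\to0$.

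Finally, I would close the induction that the whole sequence remains inside the $\delta$-neighborhood where the companion theorems are valid. Since $\angle(x_0,v)$ is small by hypothesis, the smallness of $C$ (together with $\theta_\alpha<1$ in the linear case, or the higher powers of $\sin\angle(v,x_k)$ in the quadratic and cubic cases) guarantees $\sin\angle(v,x_{k+1}^{\mathrm{PLMR}})\leq\sin\angle(v,x_k)$ at every step, so the angle—and hence, via the generalized-norm identities of the Appendix, the combined error $\bigl\|[x_k;\rho_k]-[v;\lambda]\bigr\|$—never leaves the region $\leq\delta$, keeping the JD constants uniform in $k$. The main obstacle is not any single step but the bookkeeping of constants: I must verify in the linear case that $C\theta_\alpha<1$ (which is where the ``small $C$'' assumption is genuinely used), and in all three cases reconcile the ordinary angle $\angle(v,\cdot)$ appearing in the refined-projection assumption with the generalized sine and cosine $s_k,c_k$ that underlie the tolerances $\tau^{(\beta)}_k,\tau^{(\gamma)}_k$ and the convergence estimates of \cite{Szyld.Xue.2013}; establishing the equivalence of these two metrics near $v$, which holds under the boundedness assumption, is the one place where the argument must be made carefully.
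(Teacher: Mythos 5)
Your proposal is correct and follows essentially the same route as the paper's own proof: it uses Lemma~\ref{lemmaJDPLMR} to get $\sin\angle(v,\mathcal{U}_k^{\mathrm{PLMR}})\leq\sin\angle(v,x_{k+1}^{\mathrm{JD}})$, combines this with the refined-projection assumption to bound the PLMR error by $C$ times the companion JD error, and then invokes Theorems 7 and 11 of \cite{Szyld.Xue.2013} for the three tolerance regimes. Your additional bookkeeping (the induction keeping iterates in the $\delta$-neighborhood and the requirement $C\theta_\alpha<1$) makes explicit details the paper leaves implicit, but the argument is the same.
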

\begin{proof} Given the above assumptions, it is shown in Theorems 7 and 11 in \cite{Szyld.Xue.2013} that the basic JD method with approximate inner linear solves that satisfy the tolerances $\tau_k^{(\alpha)}$, $\tau_k^{(\beta)}$ and $\tau_k^{(\gamma)}$, respectively, converges locally towards $(\lambda, v)$ linearly, quadratically and cubically, respectively. Note that basic JD generates the new approximation $x^{\textup{JD}}_{k+1} \in \mathcal{U}^{\mathrm{PLMR}}_k$ as shown in (\ref{Kspace}), and therefore $\sin \angle(v, x^{\textup{JD}}_{k+1}) \geq \sin \angle (v, \mathcal{U}^{\mathrm{PLMR}}_k)$. By assumption, the refined projection of PLMR delivers the new eigenvector approximation $x^{\textup{PLMR}}_{k+1}$ satisfying $\sin \angle(v,x^{\textup{PLMR}}_{k+1}) \leq C \sin \angle (v, \mathcal{U}^{\mathrm{PLMR}}_k)$, and therefore $\sin \angle(v,x^{\textup{PLMR}}_{k+1}) \leq C \sin \angle(v, x^{\textup{JD}}_{k+1})$. The convergence of PLMR thus follows directly from that of basic JD. 
\end{proof}

\section{Block PLMR} In this section, we consider simultaneous computation of a few interior eigenvalues and their eigenvectors.
To this end, we develop a block variant of PLMR, referred to as BPLMR. We shall see that most of the techniques used in PLMR can be extended directly to the block case. We also discuss deflation techniques and describe their application to computing large numbers 
of successive eigenpairs.    

\subsection{Search subspace and preconditioning} Assume that we want to find the $q$ eigenvalues  closest to $\sigma$, namely, $\lambda_{1}, \ldots, \lambda_{q}$, such that $|\lambda_1-\sigma| \leq \ldots, \leq |\lambda_{q}-\sigma|$, together with the associated eigenvectors $v_{1},\ldots,v_{q}$\footnote{To facilitate the description of interior eigenvalue computation, the numbering of eigenvalues here is different from the natural order defined in Definition 2.3.}. 
Let $X_k =[x_k^{(1)}\, \ldots \: x_k^{(q)}] \in \mathbb{C}^{n \times q}$ 
be the block of eigenvector approximations at 
iteration $k$, and let $\Phi_k = \mathrm{diag}(\rho_k^{(1)}, \ldots\: \rho_k^{(q)})$ denote a diagonal matrix of the Rayleigh functional values $\rho_k^{(i)}=\rho(x_k^{(i)})$. 
We define the block of eigenresiduals
$$\mathbb{T}(X_k,\Phi_k)=[T(\rho_k^{(1)})x_k^{(1)}\,\ldots\:T(\rho_k^{(q)})x_k^{(q)}],$$ 
and can hence construct a 
LOBPCG-type
search subspace spanned by the columns of $X_k$, $\mathbb{T}(X_k,\Phi_k)$, and $P_{k-1}$, where
$P_{k-1}$ carries information about the approximate eigenvectors in the previous step ($P_{-1} = \mathbf{0}$); see Part I of this study \cite{Szyld.Xue.2014b}. 
Similar to the single-vector case, such a LOBPCG subspace can be further expanded to better accommodate approximations 
of the interior eigenpairs, leading to the BPLMR search subspace  
\begin{eqnarray}\label{spaceBPLMR}
\mathcal{U}^{\mbox{\scriptsize{BPLMR($m$)}}}_k=\mathcal{K}_{m+1}\left(\mathbb{M}^{\dagger}_{\mathbf{\Pi}}\mathbb{T}(\,\cdot\,,\Phi_k), X_k\right) + \mathrm{range}\left(P_{k-1}\right),
\end{eqnarray}
where $\mathcal{K}_{m+1}\left(\mathbb{M}^{\dagger}_{\mathbf{\Pi}}\mathbb{T}(\,\cdot\,,\Phi_k), X_k\right)$ is the 
block Krylov subspace generated by the starting block $X_k$ and the linear operator 
$\mathbb{L}_k(\cdot) \equiv \mathbb{M}^{\dagger}_{\mathbf{\Pi}}\mathbb{T}(\,\cdot\,,\Phi_k)$. That is, 
\[
\mathcal{K}_{m+1}\left(\mathbb{M}^{\dagger}_{\mathbf{\Pi}}\mathbb{T}(\,\cdot\,,\Phi_k), X_k\right) = \mathrm{range}\big\{X_k,\mathbb{L}_k(X_k), \mathbb{L}_k\left(\mathbb{L}_k(X_k)\right), \ldots, \mathbb{L}_k^m(X_k)\big\},
\]
where $\mathbb{L}^m(\cdot)$ stands for the composition of $\mathbb{L}(\cdot)$ with itself for $m$ times, and $\mbox{range}(X)$ denotes the column space of $X$. 

By analogy with~\eqref{JDprj} and \eqref{pcdMp}, in~\eqref{spaceBPLMR}, we introduce a stabilized preconditioner 
\begin{eqnarray}\label{stabpcd1}
\mathbb{M}_{\mathbf{\Pi}} = \mathbf{\Pi}M\mathbf{\Pi}^*, 
\end{eqnarray}
where 
\[
\mathbf{\Pi} = I - Z_k(X_k^*Z_k)^{-1}X_k^*, \quad Z_k \equiv \mathbb{T}'(X_k,\Phi_k) =[T'(\rho_k^{(1)})x_k^{(1)}\,\ldots\:T'(\rho_k^{(q)})x_k^{(q)}].
\]
The above projector $\mathbf{\Pi}$ is a direct extension of the one defined in~\eqref{JDprj} to the block case.
Similar to~\eqref{pcdmvp0}, the action of $\mathbb{M}_{\mathbf{\Pi}}^{\dagger}$ on a block of vectors $B \in \mathrm{range}\left(\mathbb{M}_{\mathbf{\Pi}}\right)$
can be expressed as 
\begin{eqnarray}\label{pcdmvp1}
\mathbb{M}_{\mathbf{\Pi}}^{\dagger}B = \left(I-M^{-1}Z_k(Z_k^*M^{-1}Z_k)^{-1}Z_k^*\right)M^{-1}B.
\end{eqnarray}
Clearly,~\eqref{spaceBPLMR} represents a sum of $q$ PLMR search subspaces~\eqref{spacePLMR2} with starting vectors 
$x_k^{(i)}$ ($1 \leq i \leq q$) and the preconditioner $\mathbb{M}_{\mathbf{\Pi}}$ in~(\ref{stabpcd1}), 
and is of dimension $(m+2)q$ in general.

The block search direction $P_{k-1}$ can have several possible formulations. 
One option is to define $P_{k-1} = X_k-X_{k-1}$, which represents 
a direct generalization of the single-vector directions $p_{k-1} = x_k - x_{k-1}$ used is PLMR.
An alternative formulation can be given by 
\begin{equation}\label{eq:P}
P_{k-1}=X_k-X_{k-1}\left(X_{k-1}^*X_{k-1}\right)^{-1}X_{k-1}^*X_k,
\end{equation} 
which is a residual of the least squares problem $\min_{G \in \mathbb{C}^{q \times q}} \|X_k-X_{k-1}G\|_F$. 
Hence, definition~\eqref{eq:P} guarntees that $P_{k-1}$ has the smallest norm columnwise for all blocks of the form $X_k - X_{k-1}G$, 
where $G \in \mathbb{C}^{q \times q}$. 

In exact arithmetic, the two variants of $P_{k-1}$ lead to  
the same search subspace, because
\begin{eqnarray}\nonumber
\mathcal{U}^{\mbox{\scriptsize{BPLMR($m$)}}}_k&=&\mathrm{range}\{P_{k-1}\} + \mathcal{K}_{m+1}\left(\mathbb{M}^{\dagger}_{\mathbf{\Pi}}\mathbb{T}(\,\cdot\,,\Phi_k), X_k\right) \\ \nonumber
&=& \mathrm{range}\{X_{k-1}\}+\mathcal{K}_{m+1}\left(\mathbb{M}^{\dagger}_{\mathbf{\Pi}}\mathbb{T}(\,\cdot\,,\Phi_k), X_k\right).
\end{eqnarray}
In practice, BPLMR($m$) working with either version of $P_{k-1}$ converges equally rapidly in most cases, but formulation~\eqref{eq:P} tends to perform slightly better occasionally. We have no complete understanding of this, but have an intuitive explanation. The individual eigenvector approximations in $X_k$ are usually properly ordered, e.g., by the distances between the corresponding eigenvalue approximations and $\sigma$. As the algorithm proceeds, the ordering of some eigenvector approximations could change due to the change of their eigenvalue approximations. When such a change occurs, $P_{k-1}=X_k\!-\!X_{k-1}$ generates poor search directions that represent the difference between approximations to distinct eigenvectors in two consecutive iterations. By contrast, the least squares problem finds a matrix $G = (X_{k-1}^*X_{k-1})^{-1}X_{k-1}^*X_k$ that `reorders' the columns of $X_k$ so that $X_kG$ aligns columnwise with $X_k$, and thus $P_{k-1} = X_k\!-\!X_{k-1}G$ represents the the difference between the \emph{subspaces} spanned by the two block iterates, and it is more likely to be numerically favorable.



\subsection{Subspace projection and extraction}\label{sec_BPLMR_prj} The subspace projection and extraction for BPLMR also follow PLMR closely. In particular, let $U_k \in \mathbb{C}^{n \times (m+2)q}$ contain orthonormal basis vectors of (\ref{spaceBPLMR}). First, we
use the standard Rayleigh-Ritz procedure to obtain the projected Hermitian eigenproblem $U_k^*T(\nu)U_ky=0$, whose  
$(m+2)q$ eigenvalues (the Ritz values) 
satisfy
the nonlinear variational principle (Theorem \ref{nlvarprinciple}) and the nonlinear Cauchy interlacing theorem (Theorem \ref{cauchyilt}).
Next, we find the $r$ Ritz values 
$\nu_{1},\ldots,\nu_{r}$ that are closest to $\sigma$, and order them according to the eigenresidual norms 
of the corresponding Ritz pairs, so that~\eqref{eq:sort} holds for any $1\leq i < j\leq r$. 
Then, given the $r$ candidates, we choose $s$ Ritz values~$\nu_i$, and the associated Ritz vectors $z_i = U_k y_i$,  
that yield the smallest eigenresiduals. 
Finally, out of these $s$ Ritz pairs, 
we select the $q$ Ritz values 
$\nu_{\ell_1},\ldots, \nu_{\ell_q}$ that are closest to $\sigma$,
and further use them in the refinement procedure.

As we have already explained, the motivation for this approach is to filter out spurious Ritz values by
first 
including
all promising approximations 
in a relatively large set of $r$ Ritz pairs,
and then abandoning those with largest eigenresidual norm to obtain a 
set of $s$ candidates. This set is used to choose the $q$ most promising Ritz values, i.e., those closest to $\sigma$. 
By default, we let $r=\min\left((m+2)q,\max(3q,\ceil{(m+2)q/3})\right)$ and $s = {2q}$. 

Finally, given the Ritz values $\nu_{\ell_1},\ldots, \nu_{\ell_q}$, we use them in the refinement step
\begin{equation}\label{eq:ref_block}
y^{(i)}_{MR} = \mathrm{argmin}_{\|y\|=1}\left\|T(\nu_{\ell_i}) U_k y\right\|_2, \quad 1 \leq i \leq q.
\end{equation}
Solving $q$ minimization problems~\eqref{eq:ref_block} allows defining  
the block $X_{k+1}=[x_{k+1}^{(1)}\,\ldots\,x_{k+1}^{(q)}]$ 
of new approximate eigenvectors, such that $x^{(i)}_{k+1}=U_ky^{(i)}_{MR}$.
The corresponding values of the Rayleigh fuctional are then placed
on the diagonal of $\Phi_{k+1}$.

\subsection{Refined projection for semi-simple and tightly clustered eigenvalues}\label{sec_RFJ_repeatedev}

The refined projection described in Section \ref{sec_BPLMR_prj} can generate improved 
approximations to individual eigevectors if all the targeted eigenvalues 
$\lambda_{1},\ldots,\lambda_{q}$ 
are simple and well separated. However, if an eigenvalue of interest is semi-simple, i.e., 
$\mathrm{dim} \left(\mathrm{null}\,T(\lambda_{i})\right) = g > 1$ 
for some $i$, or if $g$ eigenvalues are tightly clustered, then the suggested 
refinement scheme has difficulties computing the entire invariant subspace. 
In this situation, the Rayleigh-Ritz procedure generates several (up to $g$) Ritz values that are very close to each other. The refinement step~\eqref{eq:ref_block} with these Ritz values then delivers almost identical new eigenvector approximations, which leads to an inaccurate approximation to the complete eigenspace.

In order to adapt the refined projection to the case of semi-simple or tightly clustered
eigenvalues, we propose the following strategy. 
We first select the most promising Ritz values $\nu_{\ell_1}, \ldots, \nu_{\ell_q}$ and distribute them among $K$ groups $G_1, \ldots, G_K$ in such a way that all the values in one group are 
very close to each other, whereas those belonging to different groups are relatively 
well-separated. Next, we presume that the Ritz values inside each group $G_{\tau}$ 
that contains multiple elements converge to a numerically semi-simple eigenvalue,
so that each $G_{\tau}$ aims at revealing a distinct semi-simple eigenvalue. 
In this case, instead of computing individual refined eigenvector approximations for the Ritz values in $G_{\tau}$ using~\eqref{eq:ref_block},
we extract an orthonormal basis that approximates 
the entire eigenspace associated with the targeted semi-simple eigenvalue. 
This is achieved by utilizing singular vectors corresponding to several smallest singular values of the reduced matrices $T(\nu)U_k$, where $\nu$ is a representative
value for the Ritz values in the given group.

More precisely, we take $G_1$ as an example, and assume without loss of generality that it contains $g > 1$ tightly clustered Ritz values, i.e., $G_1 = \{ \nu_{\ell_{1}}, \ldots, \nu_{\ell_{g}} \}$ 
for some $1 \leq g \leq q$. 
We then find the right singular vectors $y_1,\ldots,y_g$ corresponding to the $g$ smallest singular values of the matrix $T(\nu_{\ell_{1}})U_k$,
and define the new iterates as $x_{k+1}^{(i)} = U_k y_i$, 
where $1\leq i \leq g$.
The constructed vectors $x_{k+1}^{(i)}$ deliver an orthonormal basis that is 
expected to approximate the eigenspace of a semi-simple eigenvalue 
$\lambda \approx \nu_{\ell_{1}} \approx \ldots \approx \nu_{\ell_g}$. 
Note that $y_1, \ldots, y_g$ can also be the singular vectors of any of the 
matrices $T(\nu_{\ell_{i}}) U_k$, since the values $\nu_{\ell_{i}}$
are very close to each other by construction ($1 \leq i \leq g$). 

In order to assign the Ritz values to the groups $G_1, \ldots, G_K$, 
an appropriate threshold needs be chosen to determine if several values are sufficiently close to be included into one
group. An excessively small threshold could mistakenly treat a semi-simple eigenvalue as several well-separated simple eigenvalues and thus encounter the difficulty described above (fail to generate the complete eigenspace accurately), whereas an overly large threshold could incorrectly treat several distinct simple eigenvalues as a semi-simple one,
resulting in inaccurate eigenvector approximations. 
For example, in our BPLMR implementation, the Ritz values $\nu_{\ell_{1}},  \ldots, \nu_{\ell_{g}}$ are assigned to the same group if 
\begin{eqnarray}\label{eq:group}
\max_{i = 1, \ldots, g} \frac{|\nu_{\ell_{i}}-\bar{\nu}|}{|\bar{\nu}|} \leq 10^{-8},\quad \mbox{ where } 
\bar{\nu} =\frac{\sum_{i=1}^g \nu_{\ell_{i}}}{g}\cdot
\end{eqnarray}
Whenever available, an a priori information on distribution of the desired eigenvalues can be exploited for a more flexible threshold estimation. 

It is clear that, in practice, the Ritz values of group $G_{\tau}$ can converge
to multiple tightly clustered eigenvalues, which contradicts our assumption on the
convergence to a single semi-simple eigenvalue. Nevertheless, the assumption turns
out to be non-restrictive.  
{In fact, a group of tightly clustered eigenvalues can be considered as those arising from a small perturbation imposed on a semi-simple eigenvalue. Consequently, the invariant subspace associated with this group comes from a slight perturbation of the eigenspace corresponding to this semi-simple eigenvalue. In this case, it is not necessary, and in fact impractical, to compute each individual eigenvector to very high accuracy. The orthonormal basis obtained from our proposed approach forms a good approximation to the eigenspace corresponding to the presumably semi-simple eigenvalue, and thus it provides a good approximation to the invariant subspace for the clustered eigenvalues. If there is need to resolve each individual eigenpair in this clustered group to higher accuracy, we can set the tolerance described in \eqref{eq:group} moderately smaller. However, our experience indicates that an excessively small tolerance tends to delay the convergence, if the desired eigenvalue is indeed semi-simple. }

We note that additional care needs to be taken in the refinement step to avoid repeated convergence. This is because such a refinement procedure is constructed independently for each numerically distinct Ritz value, and thus the singular vectors coming from two different residual minimization problems~\eqref{eq:ref_block} tend to be numerically linearly dependent whenever two selected Ritz values $\nu_{\ell_i}$ and $\nu_{\ell_j}$ are close but not sufficiently close to be distributed into one group. To tackle this difficulty, for each candidate new eigenvector approximation $x_{k+1}^{(i)}=U_ky_{MR}^{(i)}$ ($1 \leq i \leq q$), we check if $\angle(x_{k+1}^{(i)}, \mathcal{X})$ is greater than some threshold, where $\mathcal{X}$ stands for the space spanned by all previously selected new eigenvector approximations $x_{k+1}^{(1)},\ldots,x_{k+1}^{(i-1)}$. We accept such a candidate if this criterion is satisfied; otherwise, we choose the singular vector associated with the next smallest singular value and test this condition again, until a linearly independent new eigenvector approximation $x_{k+1}^{(i)}$ is found.

\subsection{Deflation} Deflation plays a crucial role in simultaneous calculation of several eigenpairs. 
It allows eigensolvers to exclude the converged quantities from the computation and update only unconverged eigenvector approximations. It also ensures that no repeated convergence occurs. 
%
For linear eigenproblems, deflation is based on the eigen-decomposition (Hermitian case) or the Schur form (non-Hermitian case), and is usually fulfilled by orthogonalizing the search subspace against the converged invariant subspace. 
Such a deflation mechanism is often 
called ``hard deflation'' (or ``hard locking''), as the converged eigenvectors 
are not explicitly included into the search subspace. For nonlinear eigenproblems $T(\lambda)v=0$, deflation is performed by working with invariant pairs directly {using} special variants of Newton-like methods \cite{Betcke.Kressner.2011}, \cite{Effenberg.2013}, \cite{Kressner.2009}, or using the infinite Arnoldi method that allows for a Schur form on a transformed linear space \cite{Jarlebring.etal.2012}, \cite{Jarlebring.etal.2014}. 

For nonlinear Hermitian eigenproblems $T(\lambda)v=0$ that satisfy the variational principle (Theorem~\ref{nlvarprinciple}), deflation can be performed 
without explicitly preserving invariant pairs, since all eigenvectors are linearly independent. One would naturally wonder if hard deflation 
is possible, e.g., through
orthogonalization based on the scalar-valued function $[\cdot,\cdot]$ defined in (\ref{defdotprod}). Unfortunately, this approach is not viable, 
as $[\cdot,\cdot]$ is not bilinear in general, and thus the Gram-Schmidt procedure does not work. 
Instead, we simply include the converged invariant subspace
into the BPLMR search subspace generated by the unconverged eigenvectors,
and, after performing the refined projection, update only the unconverged pairs.
%
This strategy is usually called ``soft deflation'' (or ``soft locking''). 

\begin{algorithm}[htbp!]
\begin{small}
\begin{center}
  \begin{minipage}{5in}
\begin{tabular}{p{0.5in}p{4.1in}}
{\bf Input}:  &  \begin{minipage}[t]{4.0in}
Initial eigenvector approximations $X_0 \in \mathbb{C}^{n \times q}$, a preconditioner $M$, the shift $\sigma \in \mathbb{R}$, and integers $r,\,s>0$;
                  \end{minipage} \\
{\bf Output}:  &  \begin{minipage}[t]{4.0in}
                 $q$ eigenpairs $(\lambda_i,v_i)$, where $\lambda_i$'s are the eigenvalues of $T(\cdot)$ closest to $\sigma$;
                  \end{minipage}
\end{tabular}
\begin{algorithmic}[1]
\STATE Set $X_0^{act} \gets X_0$, $k \gets 0$, $d \gets 0$, and compute $\Phi_0^{act} \gets \mathrm{diag}(\rho(x_0^{(1)}),\ldots,\rho(x_0^{(q)}))$. 
\STATE Set $X_0^{conv} \gets [\,]$ and $P_{-1}^{act} \gets [\,]$. 
\WHILE {convergence not reached}
  \STATE If $k > 0$, then~$P_{k-1}^{act} \gets X_k^{act}-X_{k-1}^{act}\left(X_{k-1}^{act*} X_{k-1}^{act}\right)^{-1}X_{k-1}^{act*} X_k^{act}$. 
   \STATE Compute an orthonormal basis $U_k$ of the search subspace~\eqref{eq:bplmr_subsp_defl}, 
where the action of the preconditioner $\mathbb{M}_\mathbf{\Pi}$ is given by~\eqref{pcdmvp1} with 
$Z_k \equiv Z_k^{act} = \mathbb{T}'(X_k^{act},\Phi_k^{act})$.
   \STATE Solve the Rayleigh-Ritz projected eigenproblem~\eqref{eq:rr} for all Ritz pairs. 
   \STATE 
Restore the converged $d$ Ritz pairs, then select the $r$ \emph{unconverged} Ritz values closest to $\sigma$, and sort the corresponding Ritz pairs according to their eigenresidual norms~\eqref{eq:sort}. Then choose the $s$ Ritz pairs with minimal eigenresidual, and take the $q-d$ Ritz values $\nu_{\ell_1},\ldots,\nu_{\ell_{q-d}}$ closest to $\sigma$ from the $s$ candidates.  
   \STATE Distribute the $q-d$ Ritz values among $K$ groups $G_1,\ldots,G_K$, such that \eqref{eq:group} is satisfied. That is, the values in the same group are tightly clustered, and those in different groups are well-separated.\\
   \STATE 
(a) For $\tau = 1, 2, \ldots, K$, let $G_\tau$ be the current group containing $g(\tau)$ Ritz values, and $\nu_{\ell(\tau)}$ be a Ritz value in $G_\tau$. Find the smallest singular values of $T(\nu_{\ell(\tau)})U_k$ and associated right singular vectors $y_1,\,y_2,\,\ldots$ \\
(b) For $i=1,2,\ldots,d+(m+2)(q-d)$, compute the candidate new eigenvector approximation $U_ky_i$, and accept it only if $\angle(U_ky_i, \mathcal{X}) > \delta$, where $\mathcal{X}$ is spanned by all columns of $\mathrm{X}_{k-1}^{conv}$ and all previous accepted new eigenvector approximations.\\
(c) Once $g(\tau)$ new eigenvector approximations are obtained for group $G_\tau$, reorder all new eigenvector approximations such that $\big|\rho(x_{k+1}^{(d+1)})\!-\sigma\big| \leq \ldots \leq \big|\rho(x_{k+1}^{(q)})\!-\sigma\big|$. Normalize each column such that $x_{k+1}^{(i)\,*}T'(\rho(x_{k+1}^{(i)}))x_{k+1}^{(i)}=1$ for all $d+1 \leq i \leq q$. Move to process the next group until all $K$ groups are processed.
\STATE Determine the number $d$ of converged eigenvectors. Set $X_{k+1}^{conv}\gets[x_{k+1}^{(1)},\ldots,x_{k+1}^{(d)}]$, $X_{k+1}^{act} \gets [x_{k+1}^{(d+1)},\ldots,x_{k+1}^{(q)}]$, and $\Phi_{k+1}^{act} \gets \mathrm{diag}\big(\rho(x^{(d+1)}_{k+1}),\ldots,\rho(x^{(q)}_{k+1})\big)$.    \STATE $k \gets k + 1$. If $d = q$, then declare convergence.
   \ENDWHILE
%
\STATE Set $\lambda_i \gets \rho_{k}^{(i)}$; $v_i \gets x_{k}^{(i)}$. 
Return $(\lambda_i, v_i)$ for $i = 1, \ldots, q$.
\end{algorithmic}
\end{minipage}
\end{center}
\end{small}
  \caption{The BPLMR($m$) algorithm for a Hermitian eigenproblem $T(\lambda)v = 0$}
  \label{alg:bplmr}
\end{algorithm}

Specifically, assume that the first $d$ columns of $X_k$ have converged. 
We can then distinguish between the converged and unconverged columns.
The former can be placed into  the matrix $X_k^{conv}=[x_k^{(1)}\,\ldots\,x_k^{(d)}]$, whereas the latter are used to form the ``active'' block $X_k^{act}=[x_k^{(d+1)}\,\ldots\,x_k^{(q)}]$. The deflated BPLMR subspace can then be defined as   
\begin{equation}\label{eq:bplmr_subsp_defl}
\mathcal{U}_k^{\mbox{\scriptsize{BPLMR($m$)}}}=\mathrm{range}(X_k^{conv})+\mathcal{K}_{m+1}\left(\mathbb{M}_{\mathbf{\Pi}}^{\dagger}\mathbb{T}(\cdot,\Phi_k^{act}),X_k^{act}\right)+\mathrm{range}\left(P_{k-1}^{act}\right),
\end{equation}
where $\Phi_k^{act}=\mathrm{diag}(\rho(x_k^{(d+1)}),\ldots,\rho(x_k^{(q)}))$, and the block search direction 
$P_{k-1}^{act}$ is constructed according to~\eqref{eq:P} with $X_k$ and  $X_{k-1}$ replaced by $X_k^{act}$ and $X_{k-1}^{act}$, respectively. Here, $X_{k-1}^{act}$ refers to the eigenvector approximations in iteration $k\!-\!1$ that correspond to the active set in the current iteration $k$. 
Similarly, the preconditioner $\mathbb{M}_{\mathbf{\Pi}}$ is constructed  as in (\ref{stabpcd1}), with $X_k$, 
$\Phi_k$ and $Z_k$ replaced by $X_k^{act}$, $\Phi_k^{act}$ and $\mathbb{T}'(X_k^{act},\Phi_k^{act})$, respectively. 
Following the convention, we denote an orthonormal basis of~\eqref{eq:bplmr_subsp_defl} by $U_k$,
which contains $d+(m+2)(q-d)$ columns.

Given $U_k$, we perform the Rayleigh-Ritz procedure and solve the projected eigenproblem~\eqref{eq:rr} to obtain a set
of the Ritz pairs. We then recover the $d$ Ritz pairs that have previously converged. 
This is done by checking if a Ritz pair $(\nu,z)$ has both $\min_{1\leq i\leq d}|\nu -\rho^{(i)}|$ 
and $\angle(z, \mathrm{range}(X_k^{conv}))$ sufficiently small. Next, we apply the strategy discussed in 
Section~\ref{sec_BPLMR_prj} to select $q-d$ promising Ritz values from the remaining $(m+2)(q-d)$ Ritz pairs, and then use the selected Ritz values as shifts for the refined projection. The refined projection should be performed as described in Section \ref{sec_RFJ_repeatedev} to avoid repeated convergence. 
The entire scheme of BPLMR with deflation is summarized in Algorithm 2.

\subsection{Computing many successive eigenvalues} In this section, we discuss an extension of the use of PLMR methods for the computation of many successive eigenvalues. Such a computation is crucial in a variety of important applications, for example, where a large number of the lowest eigenvalues and corresponding eigenvectors are desired. Traditional PCG methods are generally most reliable in this setting, but they rely on the min-max property of eigenvalues and thus require complete deflation of all converged eigenvectors. Consequently, both the memory and arithmetic cost gradually become prohibitive as the number of desired eigenvalues, $n_d$, grows to a few hundred or above. In addition, for nonlinear Hermitian problems, the rapid increase in arithmetic cost is even more dramatic as $n_d$ grows, because soft deflation including all converged eigenvectors is needed for the Rayleigh-Ritz projection. As a result, solving a single projected eigenproblem becomes increasingly time-consuming.

To tackle this issue, we need to perform \emph{partial deflation}, instead of complete deflation, of converged eigenvectors. The motivation for partial deflation is that PLMR methods are designed to generate approximations to eigenvalues around the shift $\sigma$, and thus deflation of the eigenvectors associated with eigenvalues far from $\sigma$ is not necessary since the algorithms would not converge to those eigenvalues anyway, provided that a good preconditioner $M \approx T(\sigma)$ is available. Consequently, 
only a partial deflation of eigenvectors corresponding to eigenvalues near  $\sigma$ is sufficient to avoid repeated convergence. 

In fact, the partial deflation strategy can be easily developed based on the soft deflation we studied. Specifically, note that we can use soft deflation to avoid repeated convergence to any previously found eigenvectors, so that additional desired eigenpairs can be computed in an incremental manner. Let $W \in \mathbb{C}^{n \times \ell}$ contain $\ell$ converged eigenvectors already obtained. To deflate these eigenvectors, BPLMR simply develops the search subspace $$\mathrm{range}(W)+\mathrm{range}(X_k^{conv})+\mathcal{K}_{m+1}\left(\mathbb{M}_{\mathbf{\Pi}}^{\dagger}\mathbb{T}(\cdot,\Phi_k^{act}),X_k^{act}\right)+\mathrm{range}\left(P_{k-1}^{act}\right),$$ and treats $W$ the same way as $X_k^{conv}$. Specifically, it performs the Rayleigh-Ritz projection and obtains the $\ell+d$ converged Ritz pairs. It then finds the $q-d$ most promising Ritz values from the unconverged Ritz pairs and uses them as the shifts for the refinement procedure. New eigenvector approximations are generated as usual from the singular vectors corresponding to the smallest singular values of relevant matrices, and each candidate $U_ky$ is accepted only if $\angle\big(U_ky,\mathcal{X}\big)$ is not very small, where $\mathcal{X}$ is the space spanned by the columns of $W$, $X_k^{conv}$, and all previously selected new eigenvector approximations in iteration $k$. 

 
With the above extension of soft deflation, we now propose the `moving-window' style partial deflation for computing successive eigenvalues of $T(\cdot)$ on an interval $(a,b) \subset \mathbb{R}$. We start BPLMR with the set of converged eigenvectors $W = \emptyset$ to compute the $q$ eigenvalues $\lambda_{1,1},\ldots,\lambda_{1,q}$ closest to $\sigma_1=a$, and set the columns of $W$ be the corresponding eigenvectors $v_{1,1},\ldots,v_{1,q}$. Then we choose a nearby shift $\sigma_2 > \sigma_1$, and use BPLMR with $W$ to find the $q$ eigenvalues $\lambda_{2,1},\ldots,\lambda_{2,q}$ near $\sigma_2$. The two sets of eigenvalues should have no intersection due to the use of deflation. Then the new set of eigenvectors $v_{2,1},\ldots,v_{2,q}$ are added to $W$, and we choose a new shift $\sigma_3 > \sigma_2$ and invoke BPLMR again. At a certain step, if the current shift $\sigma_i$ is far from $\sigma_1$, for example, we remove the first set of eigenvectors $v_{1,1},\ldots,v_{1,q}$ from $W$. We also update the preconditioner when necessary to maintain rapid convergence for eigenvalues near the new shift. The maximum window size, i.e., the largest number of columns of $W$ allowed, is determined upon a trade-off between the storage cost and the occurrence of repeated convergence.

The described partial deflation strategy is critical for keeping the total computational cost roughly proportional to the total number, $n_d$, of desired eigenvalues. 
Recently, a strategy with similar motivation, called ``local numbering of eigenvalues'', has been successfully used with a basic nonlinear Arnoldi method for computing many successive eigenvalues \cite{Betcke.Voss.2014}. As we shall see in Section~6, our proposed approach is highly reliable and efficient in this problem setting.

\section{Numerical Experiments}\label{sect_num_exp}
We illustrate the performance of the PLMR methods on a few Hermitian eigenproblems satisfying the variational characterization (\ref{minmaxfm}) or (\ref{maxminfm}). We shall see that the new algorithms exhibit rapid and robust convergence towards interior eigenvalues, provided that good preconditioners are available. Unless otherwise noted, the experiments were performed on a Macbook computer running Mac OS X 10.7.5, MATLAB R2012b, with a 2.4 GHz Intel Core 2 Duo processor and 4GB 667MHz DDR2 memory.

\begin{table}[!h]
\begin{center}
{\footnotesize
\begin{tabular}{c|crc}
problem & type & order & interval  \\ \hline
$wiresaw$ & quadratic & $1024$ & $(0,3250)$  \\
$genhyper$ & quadratic & $4096$ & $(-843,0.3943)$  \\
$sleeper$ & quadratic & $16384$ & $(-16.33,-1.61)$  \\
$string$ & rational & $10000$ & $(4.4,1.2\times 10^9)$  \\
$pdde$ & nonlinear & $39601$ & $(-20.87,4.08)$  \\
$artificial$ & nonlinear & $16129$ & $(-0.43,3.34)$  \\
$Laplace2D$ & linear & $10000$ & $(0,8)$ \\
$Laplace3D$ & linear & $125000$ & $(0,12)$ \\
\end{tabular}
}
\end{center}
\caption{Description of the test problems}\label{tab_dspt_ss}
\end{table}

We choose eight Hermitian eigenproblems for the test. The six nonlinear eigenproblems have been introduced in part I of our study \cite{Szyld.Xue.2014b}, but we describe them here again to make this paper self-contained. Table \ref{tab_dspt_ss} summarizes these problems, among which the quadratic and the rational eigenproblems are constructed from the NLEVP toolbox \cite{Betcke.etal.2013}. The first quadratic eigenproblem $wiresaw$ of 
order $1024$ comes from the vibration analysis of a wiresaw, constructed by the command \texttt{nlevp(`wiresaw1',1024)}. The eigenvalues of this gyroscopic eigenproblem are purely imaginary and thus do not satisfy the variational principle \textup{(\ref{minmaxfm})} or \textup{(\ref{maxminfm})}, but they can be mapped to real eigenvalues of a transformed Hermitian eigenproblem by substituting $\lambda$ with $i\lambda$. The transformed problem has $1024$ pairs of real eigenvalues $\{\lambda_i^{\pm}\}$, where $\lambda_i^{-}=-\lambda_i^{+}$, and $\{\lambda_i^{-}\}$ and $\{\lambda_i^{+}\}$ lie in $I_{\ell}=(-3250,0)$ and $I_r=(0,3250)$, respectively. The variational principle (\ref{minmaxfm}) holds on $I_{\ell}$ and (\ref{maxminfm}) on $I_r$, respectively, and we look for the eigenvalues on $I_r$. Next, the hyperbolic quadratic problem $genhyper$ of 
order $4096$ is constructed by the command \texttt{nlevp(`genhyper',ev,[eye(4096) eye(4096)])}, where \texttt{ev} is a vector whose entries are the reciprocals of $8192$ random numbers generated by \texttt{randn} function initialized with a zero seed. The elements of \texttt{ev} are set to be the eigenvalues of this problem, $4096$ of which are distributed on the left interval $I_{\ell}=(-843,0.3943)$, and the rest lie in the right interval $I_r=(0.3943,20061)$. The variational principle (\ref{minmaxfm})  is satisfied on $I_{r}$ and (\ref{maxminfm}) on $I_{\ell}$, respectively, and we aim at solving the eigenvalues on $I_{\ell}$. The third quadratic problem $sleeper$ of the form $T(\lambda) = A_0+\lambda A_1+\lambda^2 A_2$ of order $16384$ models the oscillations of a rail track lying on sleepers. The problem is constructed by the command \texttt{nlevp(`sleeper',128)}, and then the matrix corresponding to the constant term is changed from $A_0$ to $A_0-2I$, so that the modified problem satisfies the variational principle \textup{(\ref{maxminfm})} on $(-16.33,-1.61)$. The rational eigenproblem $string$ of the form $T(\lambda) = A-\lambda B + \frac{\lambda}{\lambda-1}C$ of order $10000$ is generated by the command \texttt{nlevp(`string',10000)}; it arises in the finite element discretization of a boundary problem describing the eigenvibration of a string attached to a spring. The variational principle \textup{(\ref{maxminfm})} holds on the interval $(4.4,1.2\times 10^9)$. 

Two truly nonlinear eigenproblems are described as follows. The first arises from the modeling of a partial delay differential equation 
(\textit{pdde}) \cite{Jarlebring.thesis.2008} $u_t(x,t)=\Delta u(x,t)+a(x)u(x,t)+b(x)u(x,t-2)$ defined on $\Omega = [0,\pi] \times [0,\pi]$ for $t \geq 0$, where $a(x)=8\sin(x_1)\sin(x_2)$ and $b(x)=100|\sin(x_1+x_2)|$, with Dirichlet boundary condition $u(x,t)=0$ for all $x \in \partial \Omega$ and $t \geq 0$ 
Assume that the solution is of the form of $u(x,t)=e^{\lambda t}v(x)$. 
Using the standard 5-point stencil finite difference approximation to the Laplacian operator on a $200 \times 200$ uniform grid, we obtain an algebraic eigenproblem $T(\lambda) = \lambda I + (M+A)+e^{-2\lambda}B$, where the matrices $M$, $A$ and $B$ of order $39601$ are the discretized form of the Laplacian operator, 
respectively. The variational principle \textup{(\ref{maxminfm})} holds on the interval $(-20.87,4.08)$. The second is an artificial problem of order $16129$ of the form $T(\lambda) = -\sin\frac{\lambda}{5} A+\sqrt{\lambda+1}B+e^{-\lambda/\sqrt{\pi}}C$, where $A=I$, $B=\mathrm{tridiag}[1;-2;1]$, and $C$ comes from the standard 5-point stencil finite difference discretization of the Laplacian, based on a $128 \times 128$ uniform grid on the unit square, without scaling by the mesh size factor $\frac{1}{h^2}=128^2$ as done for the $\textit{pdde}$
problem. The variational principle  \textup{(\ref{maxminfm})} holds on $(-0.43,3.34)$. 


The eigenproblems $Laplace2D$ and $Laplace3D$ arise from the standard 5-point and 7-point stencil finite difference discretization of the Laplacian with Dirichlet boundary conditions on the unit square and unit cube, using $100 \times 100$ and $50 \times 50 \times 50$ uniform grids, respectively. Both are linear eigenproblems with the majority of eigenvalues 
being semi-simple. Although our focus is on nonlinear problems, we include these examples to demonstrate the BPLMR's capability to resolve multiplicities. The matrices $A$ are of order $10000$ ($Laplace2D$) and $125000$ ($Laplace3D$) and are generated using the {\sc matlab} function \texttt{laplacian.m} downloaded from the MATLAB Central File Exchange, developed by A.~Knyazev.

\subsection{PLMR vs.\ PLHR} 
In this section, we demonstrate that the PLMR's symmetry-preserving extraction strategy based on the 
refined Rayleigh-Ritz is crucial for the eigensolver's robustness. In particular, we compare PLMR to its version where
the refined Rayleigh-Ritz is replaced with the harmonic projection. 
To distinguish between the two schemes, we refer to the latter as the preconditioned locally harmonic residual (PLHR) algorithm. 
Note that the same name is used for an interior linear eigenvalue solver~\cite{Vecharynski.Knyazev.2014},
which only loosely relates to the approach considered here.     
%
%
The non-Hermitian nonlinear eigenproblem $U_k^*T(\sigma)^*T(\nu)U_ky=0$, encountered by PLHR within the harmonic Rayleigh-Ritz
procedure, is solved for the harmonic 
Ritz pair associated with the harmonic Ritz value closest to $\sigma$,
using the residual inverse iteration \cite{Jarlebring.Michiels.2011}\cite{Neumaier.1985}. 

Table \ref{tab_perform_plxr} summarizes the performance of the two methods for computing the eigenvalue closest to a given shift. Let us take the problem $wiresaw$ for instance to explain the results. We choose the shift $\sigma=800$, and we use the incomplete $\mathrm{LDL^T}$ factorization of $T(\sigma)$ with drop tolerance $\tau_d=0.8$ as the preconditioner to compute the eigenvalue $\lambda=801.026019$. The algorithms are terminated once the relative eigenresidual $\|r_k\|= \frac{\|T(\mu_k)x_k\|_2}{\|T(\mu_k)\|_F\|x_k\|_2}$ of the computed eigenpair $(\mu_k,x_k)$ satisfies $\|r_k\|\leq \tau_e=10^{-10}$. Starting with the same random initial approximation $x_0$, it takes $14$ iterations for both PLMR(2) and PLHR(2) to find the desired eigenvalue. 

\begin{table}[!htbp]
\begin{center}
{\footnotesize
\begin{tabular}{clllcc}
Problem  & $\sigma$ & \qquad$\lambda$ & Parameters  & PLMR(2) & PLHR(2) \\ \hline 
$wiresaw$ & $800$ & $\:\:\,\,801.026019$ & $\tau_d = 0.8$, $\tau_e = 10^{-10}$  & 14 & 14  \\  

$genhyper$ & $-300$ & $-283.145566$ & $\tau_d = 0$,\footnotemark[3] $\tau_e = 10^{-10}$  & 6 & 8 \\  
  
$sleeper$ & $-9.1$ &$-9.09813985$ & $\tau_d = 0.005$, $\tau_e = 10^{-10}$  & 19 & 19 \\  

$string$ & $4.9  \times 10^7$ & $\:\:\,\,48974187.5$ & $\tau_d = 0.06$, $\tau_e = 10^{-12}$   & 22 &  22 \\  

$artificial$ & $0.2$ & $\:\:\,\,0.19999002$ & $\tau_d = 0.01$, $\tau_e = 10^{-10}$  & 20 &  $\infty$ \\  

$pdde$ & $0$ & $\:\:\,\,0.00149342689$ & $\tau_d = 0.001$,$\tau_e = 10^{-10}$   & 14 & $\infty$  \\  
\end{tabular}
}
\end{center}
\caption{Comparison of \textup{PLMR(2)} and \textup{PLHR(2)} in iterations
}\label{tab_perform_plxr}
\end{table}

\footnotetext[3]{There is no need to construct an incomplete $\mathrm{LDL^T}$ preconditioner for $genhyper$ because $T(\sigma)$ is diagonal.}

Table \ref{tab_perform_plxr} shows that PLMR converges at least as rapidly as PLHR for the four initial tests, 
whereas PLHR fails to converge (marked as $\infty$) for the two remaining problems. The convergence failure occurs because PLHR stagnates with an eigenvalue approximation of low accuracy (e.g., with an eigenresidual norm around $10^{-4}$) not very close to $\sigma$. 
This stagnation can be fixed, however, by choosing a new shift $\sigma$ closer to the desired eigenvalue, or by using a stronger preconditioner. 

Thus, while the convergence rate of PLMR and its PLHR variant is similar,   
the former tends to be more robust with respect to the quality of preconditioners and the choice of the shift $\sigma$.
In addition, the robustness of PLMR strongly relies on a properly defined eigenvector extraction procedure, based on the symmtery-preserving refined Rayleigh-Ritz approach.  

\subsection{Effectiveness of the refinement procedure} In this section, we illustrate the importance of using the refined projection to stabilize the convergence of PLMR. We show that PLMR converges considerably more robustly than the version without the refinement step, which only discards spurious Ritz values and uses the strategy described in Section~3.3 to choose a Ritz pair as the new eigepair approximation. 

To demonstrate the effect of the refined projection, we compare PLMR(2) with the simplified variants of PLMR(2) and PLMR(4) that do not invoke the refinement procedure.
We run the three methods with the same random initial approximations, repeating the experiment 20 times, and show in Table \ref{tab_perform_refined} the number of times each method successfully finds the desired eigenvalue in $100$ iterations and the average count of preconditioned matvecs needed for the successful runs.

\begin{table}[!htbp]
\begin{center}
{\footnotesize
\begin{tabular}{crl|rr|rr|rr}
Problem & $\sigma$& Parameters & \multicolumn{2}{|c|}{PLMR(2)} & \multicolumn{2}{|c|}{PLMR(2)  w/o} & \multicolumn{2}{|c}{PLMR(4) w/o}\\
 &  &  & \multicolumn{2}{|c|}{} & \multicolumn{2}{|c}{refinement} & \multicolumn{2}{|c}{refinement} \\ \hline 
$wiresaw$ & $1000$ & $\tau_d = 10^{-3}$, $\tau_e = 10^{-10}$ & 20/20 & 12.3 & 3/20 & 95.0 & 19/20 & 30.7 \\  
$gen\_hyper$ & $-200$ & $\tau_d = 0$, $\tau_e = 10^{-10}$ & 20/20 & 11.1 & 6/20 & 26.7 & 10/20 & 15.2 \\  
$loaded\_string$ & $10^{5}$ & $\tau_d = 10^{-3}$, $\tau_e = 5\times10^{-12}$ & 20/20 & 6.0 & 16/20 & 19.3 & 20/20 & 8.0 \\  
$sleeper$ & $-2$ & $\tau_d = 10^{-3}$, $\tau_e = 10^{-10}$ & 20/20 & 11.3 & 5/20 & 41.6 & 19/20 & 58.9 \\  
$pdde$ & $-1$ & $\tau_d = 10^{-4}$, $\tau_e = 10^{-10}$ & 18/20 & 9.2 & 9/20 & 30.7 & 19/20 & 17.8 \\ 
$artificial$ & $0.5$ & $\tau_d = 10^{-4}$, $\tau_e = 10^{-10}$ & 20/20 & 11.4 & 5/20 & 12.4 & 17/20 & 14.6 \\ 
\end{tabular}
}
\end{center}
\caption{Comparison of \textup{PLMR(2)} with two variants that do not {perform the refinement step}: number of successful runs and average counts of preconditioned matvecs}
\label{tab_perform_refined}
\end{table}

We see clearly from Table \ref{tab_perform_refined} that the refined projection is crucial for the stabilization of convergence for PLMR. For example, we look for the eigenvalue of the problem $wiresaw$ closest to $\sigma  = 1000$, using the incomplete $\mathrm{LDL^T}$ preconditioner with drop tolerance $\tau_d = 10^{-3}$. The relative tolerance for the computed eigenpair is $\tau_e = 10^{-10}$. PLMR(2) always managed to find the desired eigenvalue, and it took $12.3$ preconditioned matvecs on average to achieve convergence. Without the refinement step, by contrast, this method only succeeded $3$ times, and on average it took $95$ preconditioned matvecs to converge. In addition, PLMR(4) without the refinement step converged $19$ times, and it took an average $30.7$ preconditioned matves to find the desired eigenpair. In fact, with only one exception (for the problem $pdde$), PLMR(2) exhibits more robust and rapid convergence than the two variants without the refinement step. Note that for $pdde$, PLMR(2) was only marginally less robust than PLMR(4) without refinement, but was still considerably more efficient than the latter. In fact, with a stronger incomplete $\mathrm{LDL^T}$ preconditioner with drop tolerance $10^{-5}$, PLMR(2) managed to outperform the latter in both robustness and efficiency.

\subsection{Order of local convergence} In Section \ref{sect_ca}, we presented a local convergence analysis of PLMR($m_k$), showing that the new method could exhibit local linear, quadratic and cubic convergence if $m_k$ is sufficiently large for each step $k$. Here, we provide some numerical evidence to support the analysis. We note that in general, perfect quadratic and cubic convergence  are rarely observed in practice. In addition, it is impractical to choose the optimal $m_k$ for each $k$ to achieve the expected order of convergence. Instead, we simply let $m_{k+1}=\beta m_k$, where $\beta$ is a small fixed integer, to illustrate that PLMR($m_k$) can easily achieve superlinear and superquadratic convergence.

\begin{table}[!htbp]
\begin{center}
{\small
\begin{tabular}{l|lll}
$wiresaw$ ($\sigma=800)$ & $m_k=2$ &  $m_{k+1}=2m_k$  & $m_{k+1}=3m_k$ \\ 
$\tau_d = 0.8$, $\gamma_0 = 0.42$ & $0.96$ & $2.06$ & $2.43$ \\ \hline

$sleeper$ ($\sigma=-9.1$) & $m_k=2$ &  $m_{k+1}=2m_k$  & $m_{k+1}=3m_k$ \\ 
$\tau_d = 0.01$, $\gamma_0 = 0.15$ & $0.98$ & $1.72$ & $2.51$ \\ \hline

$string$ ($\sigma=4.9 \times 10^7$) &  $m_k=3$ &  $m_{k+1}=2m_k$  & $m_{k+1}=3m_k$ \\ 
$\tau_d = 0.06$, $\gamma_0 = 0.1$ & $0.97$ & $1.76$ & $2.31$ \\ \hline

$artificial$ ($\sigma=0.2$) &  $m_k=3$ &  $m_{k+1}=2m_k$  & $m_{k+1}=4m_k$ \\ 
$\tau_d = 0.01$, $\gamma_0 = 0.2$ & $1.01$ & $2.45$ & $3.91$ \\ \hline

$pdde$ ($\sigma=0$) & $m_k=3$ &  $m_{k+1}=2m_k$  & $m_{k+1}=3m_k$ \\ 
$\tau_d = 0.001$, $\gamma_0 = 0.25$ & $0.98$ & $1.78$ & $2.26$ \\

\end{tabular}
}
\end{center}
\caption{Estimate of the order of local convergence for \textup{PLMR($m_k$)}}\label{tab_localcvg_order}
\end{table}

Table \ref{tab_localcvg_order} gives the estimates of the order of local convergence of PLMR($m_k$) for five test problems. Let us again take the problem $wiresaw$ as an example to interpret the results. We are looking for the eigenvalue closest to $\sigma = 800$, and we use the incomplete $\mathrm{LDL^T}$ factorization of $T(\sigma)$ with drop tolerance $\tau_d = 0.8$. The initial iterate is set as $x_0 = v+\gamma_0 u$, where $v$ is the desired unit eigenvector, $\gamma_0=0.42$, and $u$ is a fixed unit vector generated by {\sc matlab}'s \texttt{randn}. We let $m_k=2$ and run twenty PLMR($m_k$) steps. 
Then we record the relative eigenresidual $\|r_k\|=\frac{\|T(\rho_k)x_k\|_2}{\|T(\rho_k)\|_F\|x_k\|_2}$ for each $k$ and
generate points $\left(\log \|r_k\|, \log \|r_{k+1}\|\right)$, for which we find the corresponding linear least squares fitting $y=ax+b$. 
The estimated order of  convergence is the slope $a=0.96$ of the linear fit. Next, we let $m_0 = 2$, $m_{k+1}=2m_k$, 
and run three PLMR($m_k$) steps. Using the same approach, we obtain an estimated convergence of order $2.06$. 
Finally, we let $m_0 = 2$, $m_{k+1}=3m_k$, and we run two PLMR($m_k$) steps to get the convergence order of $2.43$. For all problems, we run $20$, $3$ and $2$ steps, respectively, to capture linear, quadratic and cubic convergence.

Our results show clearly that PLMR($m_k$) with a small fixed $m_k$ converges linearly, and it converges superlinearly and superquadratically with $m_{k+1}=2m_k$ and $m_{k+1}=3m_k$ (or $m_{k+1}=4m_k$), respectively. In fact, higher order of local convergence is observed for the problem $artificial$. We note, however, the efficiency of PLMR($m_k$) primarily depends on the total number of preconditioned matvecs, instead of the order of local convergence. Higher order of convergence is achieved with increasingly larger value of $m_k$ as the method proceeds. Overall, our experience is that the total number of preconditioned matves needed to achieve a certain level of eigenresidual tolerance largely depends on the quality of the preconditioner, and it is relatively insensitive to the order of local convergence. 

\subsection{PLMR vs.\ JD}
We now compare PLMR($m$) with the JD methods where the right-preconditioned GMRES($m$) is used as an inner solver (further referred to as JD-GMRES($m$)). Our numerical results show that PLMR($m$) is at least as efficient as, and is often superior to, JD-GMRES($m$), especially when working with search subspaces of a modest dimension.  

\begin{table}[!htbp]
\begin{center}
{\small
\begin{tabular}{l|l|lll}
$wiresaw$ &  &  $\tau_d$ = $0.05$  & $\tau_d$ = $0.5$ & $\tau_d$ = $0.8$ \\ 
$\sigma=800$ & PLMR & 6\:\:\textbf{12}\:\:14.05s ($2$) & 6\:\:\textbf{24}\:\:18.22s ($4$) & 4\:\:\textbf{28}\:\:19.66s ($7$) \\
$\tau_{e}=10^{-12}$, $\gamma_0 = 0.25$ & JD-GMRES & 6\:\:\textbf{18}\:\:20.84s ($2$) & 4\:\:\textbf{28}\:\:22.31s ($6$)  & 4\:\:\textbf{36}\:\:23.42s ($8$)\\ \hline

$genhyper$ &  & $\tau_d$ = $-$ &   & \\ 
$\sigma=-300$ & PLMR & 6\:\:\textbf{12}\:\:0.22s ($2$) & & \\
$\tau_{e}=10^{-12}$, $\gamma_0 = 0.04$ & JD-GMRES& 5\:\:\textbf{20}\:\:0.14s ($3$) & & \\ \hline

$sleeper$ &  & $\tau_d$ = $0.002$ &  $\tau_d$ = $0.01$  & $\tau_d$ = $0.02$ \\ 
$\sigma=-9.1$ & PLMR & 4\:\:\textbf{16}\:\:1.07s ($4$) & 3\:\:\textbf{39}\:\:1.81s ($13$) & 3\:\:\textbf{66}\:\:3.98s ($22$) \\
$\tau_{e}=10^{-12}$, $\gamma_0 = 0.1$ & JD-GMRES & 4\:\:\textbf{20}\:\:0.78s ($4$) & 3\:\:\textbf{42}\:\:1.09s ($13$) & 4\:\:\textbf{72}\:\:2.26s ($17$) \\ \hline

$string$  &  & $\tau_d$ = $0.01$ &  $\tau_d$ = $0.06$  & $\tau_d$ = $0.1$ \\ 
$\sigma=4.9 \times 10^7$ & PLMR & 4\:\:\:\:\,\textbf{8}\:\:0.82s ($2$) & 12\:\:\textbf{36}\:\:2.96s ($3$) & 8\:\:\textbf{72}\:\:4.73s ($9$) \\
$\tau_{e}=10^{-14}$, $\gamma_0 = 0.1$  & JD-GMRES & 5\:\:\textbf{15}\:\:0.98s ($2$) & \,\,\,7\:\:\textbf{56}\:\:2.48s ($7$) & 5\:\:\textbf{70}\:\:2.66s ($13$) \\ \hline

$artificial$ &  & $\tau_d$ = $0.002$ &  $\tau_d$ = $0.01$  & $\tau_d$ = $0.02$ \\ 
$\sigma=0.2$ & PLMR & 7\:\:\textbf{28}\:\:4.43s ($4$)  & 6\:\:\textbf{54}\:\:6.33s ($9$) & 7\:\:\textbf{343}\:\:48.67s ($49$)\\
$\tau_{e}=10^{-12}$, $\gamma_0 = 0.1$ & JD-GMRES & 9\:\:\textbf{45}\:\:5.88s ($4$) & 7\:\:\textbf{56}\:\:5.50s ($7$) & 8\:\:\textbf{385}\:\:32.68s ($54$) \\ \hline

$pdde$ &  & $\tau_d$ = $0.0002$ &  $\tau_d$ = $0.001$  & $\tau_d$ = $0.0016$ \\ 
$\sigma=0$ & PLMR & 3\:\:\textbf{24}\:\:6.99s ($8$)  & 3\:\:\textbf{30}\:\:8.25s ($10$) & 4\:\:\textbf{44}\:\:11.81s ($11$) \\
$\tau_{e}=10^{-12}$, $\gamma_0 = 0.1$ & JD-GMRES & 5\:\:\textbf{30}\:\:7.94s ($5$) & 3\:\:\textbf{39}\:\:8.41s ($12$) & 3\:\:\textbf{54}\:\:10.69s ($17$) \\ 

\end{tabular}
}
\end{center}
\caption{Comparison of \textup{PLMR($m$)} and basic \textup{JD-GMRES($m$)} in iterations, preconditioned {\bf matvecs}, CPU time and optimal values of $m$ (in parenthesis)}\label{tab_perform_plmrjd}
\end{table}

In Section \ref{sect_ca} we studied a close connection between PLMR($m$) and the basic variant of the JD-GMRES($m$) algorithm. It is now of interest to compare the two methods, both in terms of local and global convergence. 

As in the previous section, we choose a shift $\sigma$ for each test problem, construct the corresponding incomplete $\mathrm{LDL^T}$ preconditioner with certain drop tolerance,  and run PLMR($m$) and basic JD-GMRES($m$) with the same initial iterate $x_0$ to find the eigenvalue around the given shift. With a randomly generated $x_0$, PLMR($m$) with a sufficiently large $m$ always converges to the desired eigenvalue closest to $\sigma$, whereas basic JD-GMRES($m$) always misconverges to a different eigenvalue. This is what we expected, as basic JD without subspace expansion has poor global convergence, unless a fixed shift is used in the correction equation for sufficiently many steps before a good eigenvector approximation can be obtained. 
The PLMR($m$) method, in contrast, consistently exhibits a robust global convergence. 

Next, let us compare the two algorithms with optimal values of $m$ in local convergence. To construct the initial iterate, for both methods, we let $x_0 = v+\gamma_0 u$, where $v$ is the desired unit eigenvector, $u$ is a fixed unit perturbation vector generated by \texttt{random} function, and $\gamma_0$ is a small scalar representing the error of $x_0$. 

Take the problem $wiresaw$ with shift $\sigma=800$ as an example. The preconditioner used is the incomplete $\mathrm{LDL^T}$ factorization of $T(\sigma)$ with drop tolerance $\tau_d = 0.05,\,0.5$ and $0.8$, respectively. The initial iterate is constructed as $x_0 = v + \gamma_0 u$, where $\gamma_0 = 0.25$. Table \ref{tab_perform_plmrjd} shows that with $\tau_d = 0.05$, the optimal value $m$ for both PLMR($m$) and JD-GMRES($m$) is $m=2$ (in parenthesis), since it leads to the smallest total number of preconditioned matvecs. PLMR($2$) converges in $6$ steps, taking $12$ preconditioned matvecs and $14.05$ seconds, to achieve the relative tolerance $\frac{\|T(\rho_k)x_k\|_2}{\|T(\rho_k)\|_F\|x_k\|_2} \leq \tau_e = 10^{-12}$. JD-GMRES($2$) also converges in $6$ iterations, taking $18$ preconditioned matvecs and $20.84$ seconds. The results of the algorithms for higher drop tolerances of incomplete $\mathrm{LDL^T}$ preconditioners are obtained similarly. 

We see the following patterns in the performance comparison for local convergence. 

\begin{enumerate}
\item PLMR($m$) outperforms the basic JD-GMRES($m$) in the total number of preconditioned matvecs in essentially all circumstances. This is partially due to the fact that the former and the latter take $m$ and $m+1$ preconditioned matvecs, respectively, in each iteration step. Consequently, PLMR($m$) also tends to perform better in CPU time if the preconditioned matvec is expensive. This is an advantage of PLMR over other types of preconditioned eigensolvers, such as the nonlinear Arnoldi method, which converges considerably slower than JD if the preconditioner is weak \cite{Voss.2006}, \cite{Voss.2010}. 

\item As the quality of preconditioners deteriorates, the optimal values of $m$ for PLMR($m$) and basic JD-GMRES($m$) increase, and the latter tends to take less CPU time. This is natural, because as $m$ increases, the cost of 
other computational components in PLMR($m$), such as the refined Rayleigh-Ritz projection, becomes more pronounced. Such algorithmic components are 
intrinsically more expensive than the linear solver GMRES($m$) for large $m$. 
In this case, the superiority of PLMR might be retrieved by replacing the weak preconditioned operation $M^{-1} \approx T(\sigma)^{-1}$ by a stronger one, e.g., an approximate linear solve with the coefficient matrix $T(\sigma)$ to a reasonably small tolerance, e.g., $10^{-3}$ to $10^{-6}$. 
\end{enumerate}
\smallskip

In the following set of tests, we show
that PLMR($m$) is also more efficient than the full-featured JD-GMRES($m$) method with a search subspace of variable dimension for the Rayleigh-Ritz projection (also referred to as full JD with subspace acceleration). 
Specifically, we compare PLMR(5) with JD-GMRES(5) working with a search subspace of dimension $5$, $10$ and $20$ (denoted as JD-GMRES(5)+RR(5), etc.), respectively, for computing 10 eigenvalues closest to $\sigma$. Our implementation of JD is based on that described in \cite{Betcke.Voss.2004}, where the only difference is that our correction equation is formulated as in \eqref{JDcrteqn} and \eqref{JDprj}, using an identical projector for both algorithms. We let both methods start with the same random initial approximation $x_0$, repeat the experiment 10 times, and take the average of the number of preconditioned matvecs. The parameters used to run the tests, together with results, are summarized in Table \ref{tab_perform_plmrjd2}.

\begin{table}[!htbp]
\begin{center}
{\small
\begin{tabular}{c|cccc}
 &  PLMR(5) & JD-GMRES(5) & JD-GMRES(5) & JD-GMRES(5)\\ 
 & & + RR(5) & + RR(10) &  + RR(20)\\ \hline
$wiresaw$ ($\sigma=1000$) &&&& \\
$\tau_{e}=10^{-10}$, $\tau_d$ = $10^{-3}$ & $255$ & $1375$ & $484$ & $248$\\ \hline

$genhyper$ ($\sigma=-200$) & & & &\\ 
$\tau_{e}=10^{-10}$, $\tau_d$ = $0$ & $628$ & $1581$ & $636$ & $344$\\ \hline

$sleeper$ ($\sigma=-2$) & & & &\\ 
$\tau_{e}=10^{-10}$, $\tau_d$ = $10^{-3}$ & $265$ & $2281$ & $775$ & $396$\\ \hline

$string$ ($\sigma=10^5$)& & & &\\ 
$\tau_{e}=5 \times 10^{-12}$, $\tau_d$ = $10^{-3}$ & $273$ & $812$ & $371$ & $191$\\ \hline

$pdde$ ($\sigma=-1$)& & & &\\ 
$\tau_{e}=10^{-10}$, $\tau_d$ = $10^{-4}$ & $172$ & $495$ & $165$ & $148$\\ \hline

$artificial$ ($\sigma=0.5$)& & & &\\ 
$\tau_{e}=10^{-10}$, $\tau_d$ = $10^{-4}$ & $181$ & $325$ & $174$ & $110$\\ \hline

\end{tabular}
}
\end{center}
\caption{Comparison of \textup{PLMR($5$)} and \textup{JD-GMRES($5$) + Rayleigh-Ritz}: preconditioned {\bf matvecs} counts for computing \textup{10} eigenvalues around $\sigma$}\label{tab_perform_plmrjd2}
\end{table}

We see from Table~\ref{tab_perform_plmrjd2} that PLMR(5) is considerably more efficient than JD-GMRES(5) +RR(5), and is essentially at least as efficient as JD-GMRES(5)+RR(10). We note that PLMR(5) uses a search subspace of dimension $5$, whereas the two variants of JD-GMRES(5), respectively, work with search subspaces of total dimension $5+5=10$ and $5+10=15$, for the inner GMRES and outer JD iterations. 

We also tested other small values of $m$ for the two algorithms and found similar patterns in performance. The robust convergence of PLMR is ensured by the refined projection, whereas such a robustness of JD can only be achieved by the use of a large search subspace. For example, as can be seen in Table~\ref{tab_perform_plmrjd2}, the JD methods require
three to five times more storage to become comptetive to PLMR($m$). 
Thus, PLMR($m$) is more efficient in both arithmetic and storage cost 
when a search subspace is of a modest dimension.

\subsection{PLMR vs.\ BPLMR} In this section, we perform some tests to show that BPLMR is generally more competitive than PLMR in arithmetic cost for solving a group of clustered eigenvalues. Such a conclusion has been well established for linear eigenproblems, but to the best of our knowledge, this is the first time it is done in a nonlinear setting. 

\begin{table}[!htbp]
\begin{center}
{\footnotesize
\begin{tabular}{clrrr}
$wiresaw$ ($\sigma=800$) & PLMR(2) &  172 & {\bf 344} & 324.56s \\
$\tau_d = 10^{-3}$,$\tau_e = 10^{-10}$ & BPLMR(2) &  11 & {\bf 150} & 37.50s \\  \hline

$genhyper$ ($\sigma=-300$) & PLMR(2) &  155 & {\bf 310} & 5.74s \\
$\tau_d = 0$, $\tau_e = 10^{-10}$ & BPLMR(2) & 18 & {\bf 178} & 3.32s \\  \hline
 
$sleeper$ ($\sigma=-9.1$) & PLMR(2) &  117 & {\bf 234} & 22.68s\\
$\tau_d = 10^{-3}$, $\tau_e = 10^{-10}$ & BPLMR(2) & 10 & {\bf 150} &14.85s \\  \hline

$string$ ($\sigma=4.9 \times 10^7$) & PLMR(2) &  115 & {\bf 230} & 30.68s \\
$\tau_d = 10^{-3}$, $\tau_e = 10^{-12}$ & BPLMR(2) & 9 & {\bf 138} & 13.61s \\  \hline

$artificial$ ($\sigma=0.2$) & PLMR(2) & & &$\infty$\\
$\tau_d = 10^{-4}$, $\tau_e = 10^{-10}$ & BPLMR(2) & 6 & {\bf 50} & 8.02s \\  \hline

$pdde$ ($\sigma=0$) & PLMR(2) &  36 & {\bf 72} & 34.36s \\
$\tau_d = 10^{-4}$, $\tau_e = 10^{-10}$ & BPLMR(2) & 16 & {\bf 76} & 31.64s \\ 
\end{tabular}
}
\end{center}
\caption{Comparison of \textup{PLMR(2)} and \textup{BPLMR(2)} for computing $5$ eigenvalues around the shift $\sigma$, in iterations, preconditioned {\bf matvecs} and CPU time}\label{tab_perform_plmrbplmr}
\end{table}

We compare PLMR(2) and BPLMR(2) with the same random initial approximations to compute five eigenvalues around the shift $\sigma$. The drop tolerances $\tau_d$ of incomplete $\mathrm{LDL^T}$ preconditioner and eigenresidual tolerances $\tau_e$ are also given. Recall that both algorithms use soft deflation of converged eigenpairs, and PLMR computes the eigenvalues sequentially whereas BPLMR generates the desired approximations simultaneously.

Table \ref{tab_perform_plmrbplmr} shows that BPLMR(2) is more efficient than PLMR(2) in arithmetic cost and CPU time for most problems. For the problem $wiresaw$, for instance, it takes PLMR(2) $172$ iterations, equivalently $344$ preconditioned matvecs, and $324.56$ seconds, to find the desired $5$ eigenpairs. In contrast, it takes BPLMR(2) $11$ iterations, or equivalently $150$ preconditioned matvecs, and only $37.50$ seconds to converge. The performance difference for the two methods is minimal for the problem $pdde$. For the problem $artifical$,  PLMR(2) failed to find the fourth eigenvalue around $\sigma$ in $500$ iterations, but BPLMR(2) managed to find all the five eigenvalues. Clearly, for the same $m$, the block method is preferable in arithmetic efficiency, as long as sufficient memory is available for the larger search subspace it develops. 

\subsection{Computing many successive eigenvalues} 
To verify the reliability of the new deflation techniques, we use BPLMR with the moving-window-style partial deflation described in Section 5.5, 
to compute a large number of extreme eigenvalues. We then compare the results with those obtained by PCG methods, which are most reliable in this setting.

Table \ref{tab_perform_bplmrlobpcg} summarizes the performance of LOBPCG and BPLMR for computing a few hundred or more extreme eigenvalues of the eight test problems. 
We take the problem $string$ as an example to explain the results. The $n_d= 400$ lowest (L) eigenpairs of this problem are computed to the relative tolerance $\frac{\|T(\lambda_i)v_i\|_2}{\|T(\lambda_i)\|_F\|x_i\|_2} \leq 10^{-10}$ ($1 \leq i \leq 400$). Block methods are used to find these eigenvalues sequentially, $10$ eigenvalues each group, from the lowest to the highest ones. For each group of $10$ eigenvalues, the block size is set to be $12$ (slightly greater than 10) to stabilize the convergence. Once one group of eigenvalues are found, we compute the midpoint $\sigma=\frac{\lambda_r+\lambda_{r+1}}{2}$ between the two rightmost distinct computed eigenvalues $\lambda_r$ and $\lambda_{r+1}$, and let the new preconditioner be the $\mathrm{LDL^T}$ factorization of $T(\sigma)$. Such a preconditioner is expected to accelerate convergence towards subsequent eigenvalues near $\sigma$. 

\begin{table}[!htbp]
\begin{center}
{\scriptsize
\begin{tabular}{ccrr|rr|crrrrr}
 & & & &\multicolumn{2}{c|}{LOBPCG}  &  \multicolumn{5}{c}{LOBPCG+BPLMR(2)} \\ 
problem   & \!\!$n_d$\!\! &  \!group\! & \!block\! & precond & CPU & \!window\! & precond &  CPU & \!\!missed\!\! & \!\!repeated\!\!   \\ 
  &   &  \!size & size\! & matvecs & time & size & matvecs &  time & & \\ \hline
 $wiresaw$  & \!\!$500$\,(L)\!\! & $10$ & $12$ & { 6010} & {\bf 6566} & 4 & { 7916} & {\bf 3704} & $0$ & $0$ \\ 
 $genhyper$  &\!\!$500$\,(H)\!\!  & $10$ & $12$ & { 8657} & {\bf 7623} & 4 & { 9028} & {\bf 359} & $0$ & $0$ \\ 
  $string$  &\!\!$400$\,(L)\!\!  & $10$ & $12$ & { 5095} & {\bf 49501} & 4 & { 5843} & {\bf 1434} & $0$ & $0$ \\ 
 $artificial$  & \!\!$500$\,(H)\!\!  & $10$ & $12$ & { 5337} & {\bf 77905} & 3 & { 7317} & {\bf 2300} & $1$ & $0$ \\ 
 $pdde$  &\!\!$400$\,(H)\!\!  & $5$ &$8$ & { 4973} & {\bf 61155} & 4 & { 6457} & {\bf 3284} & $1$ & $0$ \\ 
 $sleeper$  & \!\!$501$\,(L)\!\!  & $10$ & $12$ & { 5239} & {\bf 9452} & 4 & { 5804} & {\bf 1234} & $0$ & $0$ \\ 
 $Laplace2d$ & \!\!$2001$\,(L)\!\!  & $10$ & $12$ & {\bf -} & {\bf -} & 5 & { 63749} & {\bf 13269} & $1$ & $0$ \\ 
 $Laplace3d$ & \!\!$10002$\,(L)\!\!  & $16$ & $20$ & {\bf -} & {\bf -} & 6 & { 199606} & {\bf 369806\footnotemark[4]\!\!} & $3$ & $7$ \\ 
\end{tabular}
}
\end{center}
\caption{Comparison of \textup{LOBPCG} and \textup{BPLMR} for computing successive eigenvalues: preconditioned matvecs and CPU time \textup{(}in secs\textup{)}. Eigrenresidual tolerance is $10^{-12}$ for problem $\mbox{`string'}$ and $10^{-10}$ for others. }\label{tab_perform_bplmrlobpcg}
\end{table}

\footnotetext[4]{Performed on an iMac desktop computer running Mac OS X 10.8.5, MATLAB R2012b, with a 2.9 GHz Intel Core i5 processor and 16GB 1600MHz DDR3 memory.}

To illustrate the performance of our new method, we use LOBPCG to find the lowest four (window size) blocks of eigenvalues, and then run BPLMR(2) with the moving-window-style partial deflation of the most recently converged window-sized blocks of eigenvalues. This approach is compared with LOBPCG alone for computing all desired eigenvalues. As Table~\ref{tab_perform_bplmrlobpcg} shows, it takes LOBPCG $5095$ preconditioned matvecs, and $49501$ seconds to find the $400$ lowest eigenvalues, whereas it takes the new method $5843$ preconditioned matvecs, and only $1434$ seconds. Our new approach not only avoided repeated convergence by partial deflation, but also did not miss any eigenvalue for this problem. {The CPU time of the new method is lower, because it uses partial deflation, whereas the orthogonalization costs needed for complete deflation are the bottleneck in LOBPCG, despite that the latter has lower matvec counts.} Similarly for the problem $artificial$, the highest (H) $500$ eigenvalues are computed. It takes LOBPCG and the new method $5337$ and $7317$ preconditioned matvecs, and $77905$ and $2300$ seconds, respectively. Only $1$ eigenvalue is missed by BPLMR, and no repeated convergence occurs. We note that even block PCG methods could occasionally miss a few extreme eigenvalues of linear Hermitian eigenproblems \cite{Arbenz.etal.2005}.

As we can see, our new approach is essentially as reliable as PCG methods for computing extreme eigenvalues, but is significantly less expensive when a large number $n_d$ of eigenvalues are desired. The more eigenvalues are needed, the more advantage our method has over PCG methods. We emphasize that our test is simply an illustration of the reliability and efficiency of the new algorithm in the setting of computing all successive eigenvalues on a real interval. This method can be used to find many successive interior eigenvalues as well.

\begin{figure}[!htbp]
\begin{center}
\begin{picture}(325,380)
\put(-42,270)  {\includegraphics[width=3.0in]{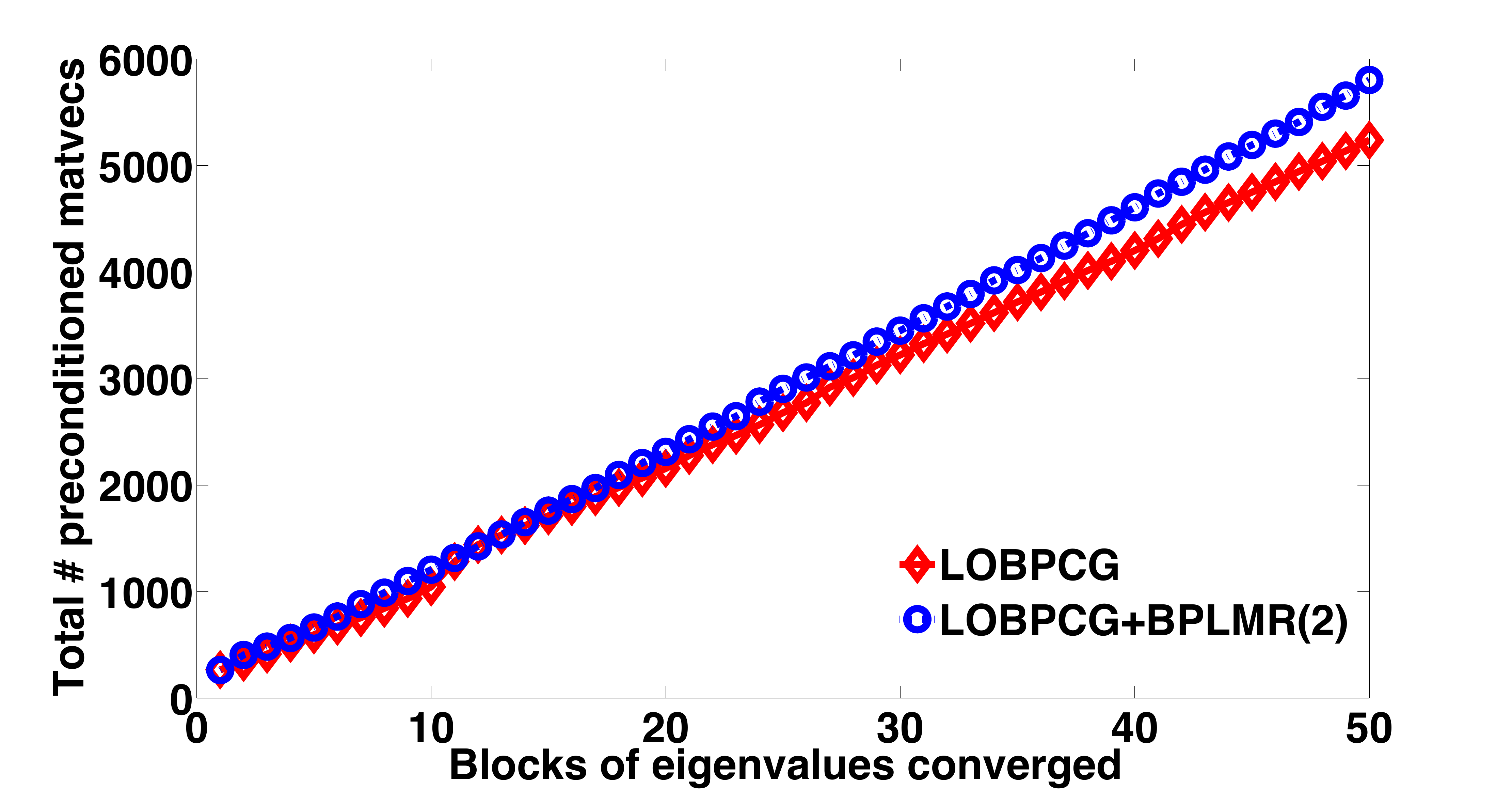}}
\put(163,270)  {\includegraphics[width=3.0in]{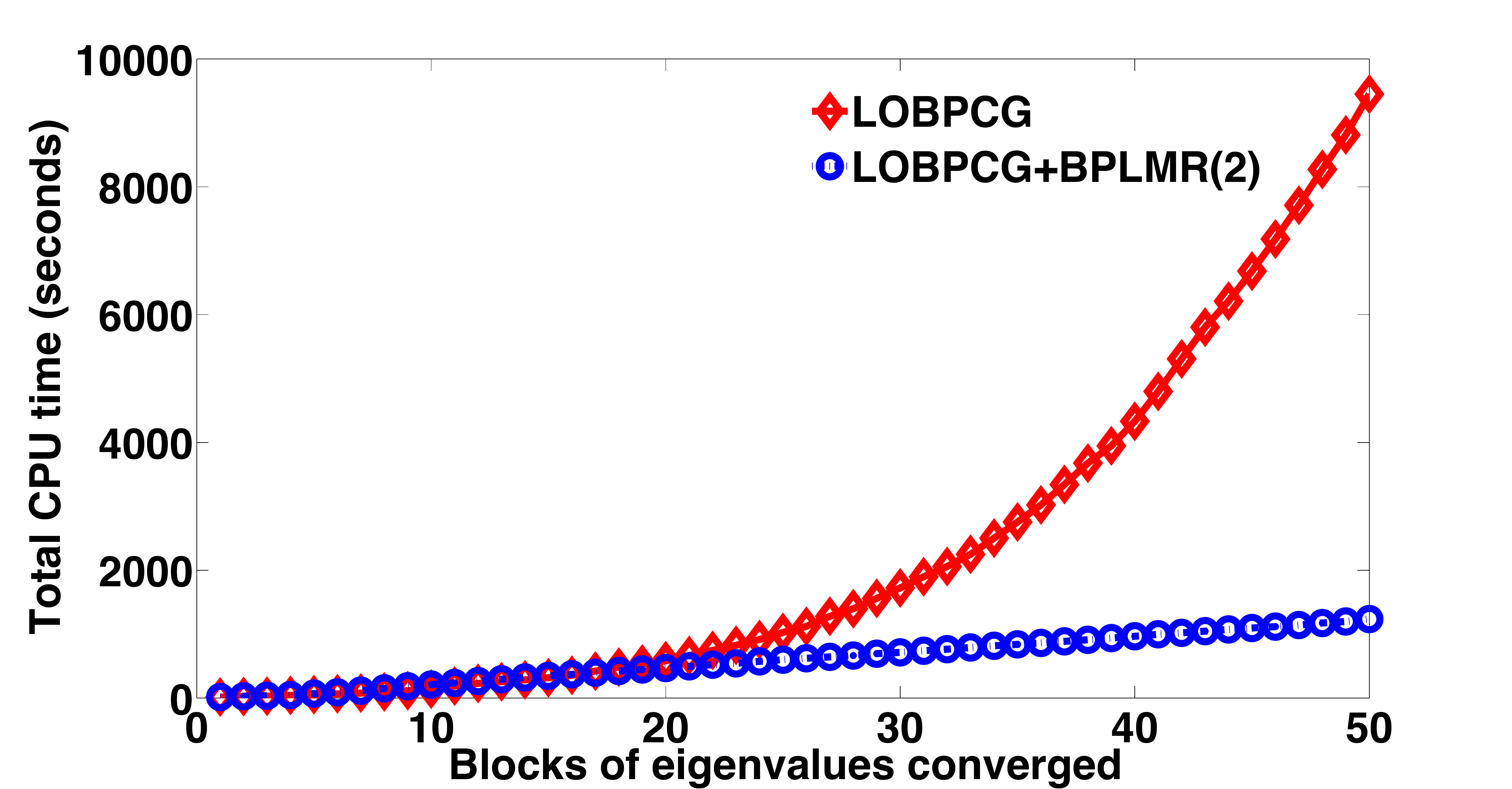}}
\put(125,255) {\includegraphics[width=1.4in]{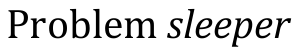}}
\put(-42,140)  {\includegraphics[width=3.0in]{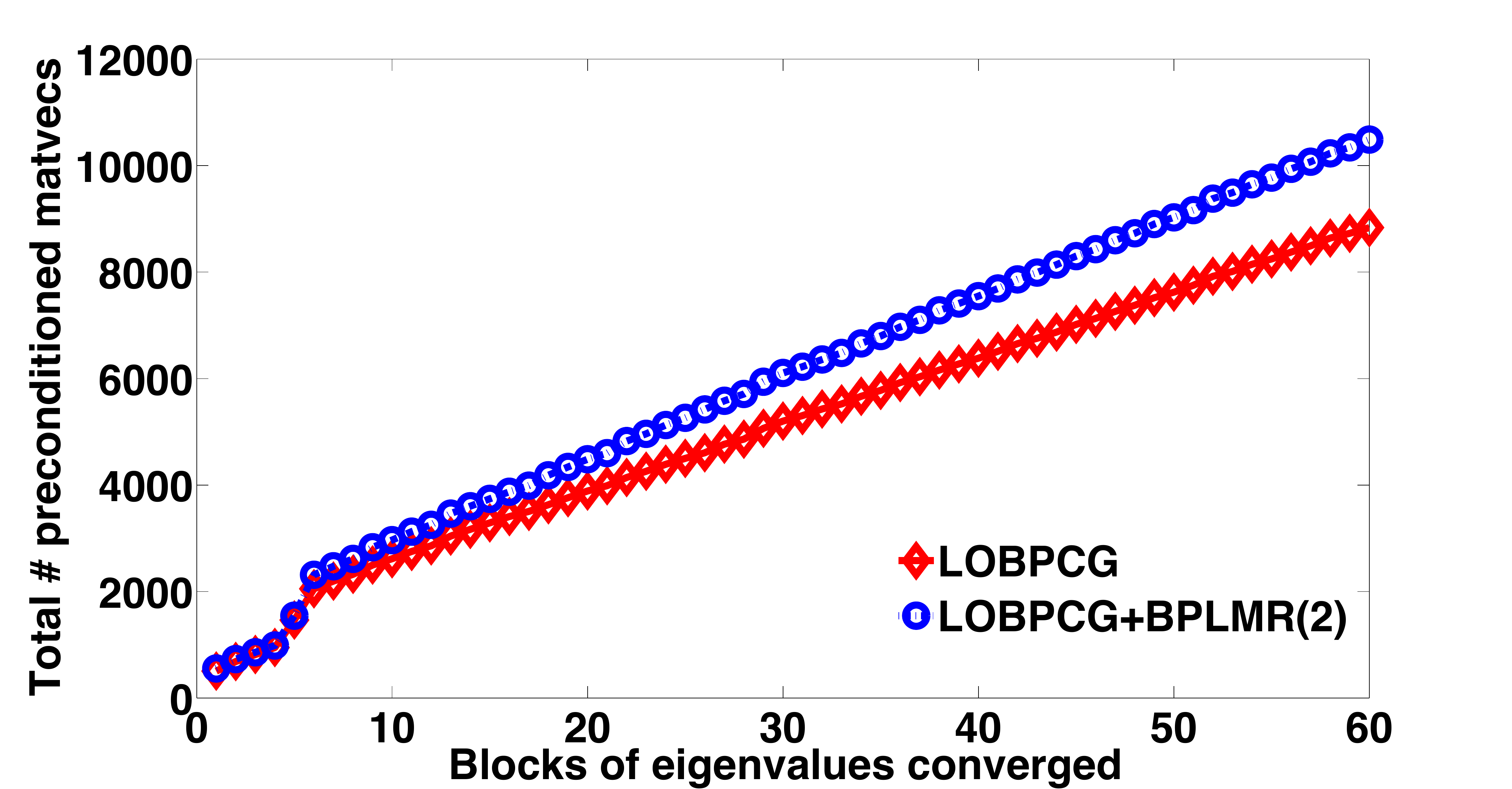}}
\put(163,140)  {\includegraphics[width=3.0in]{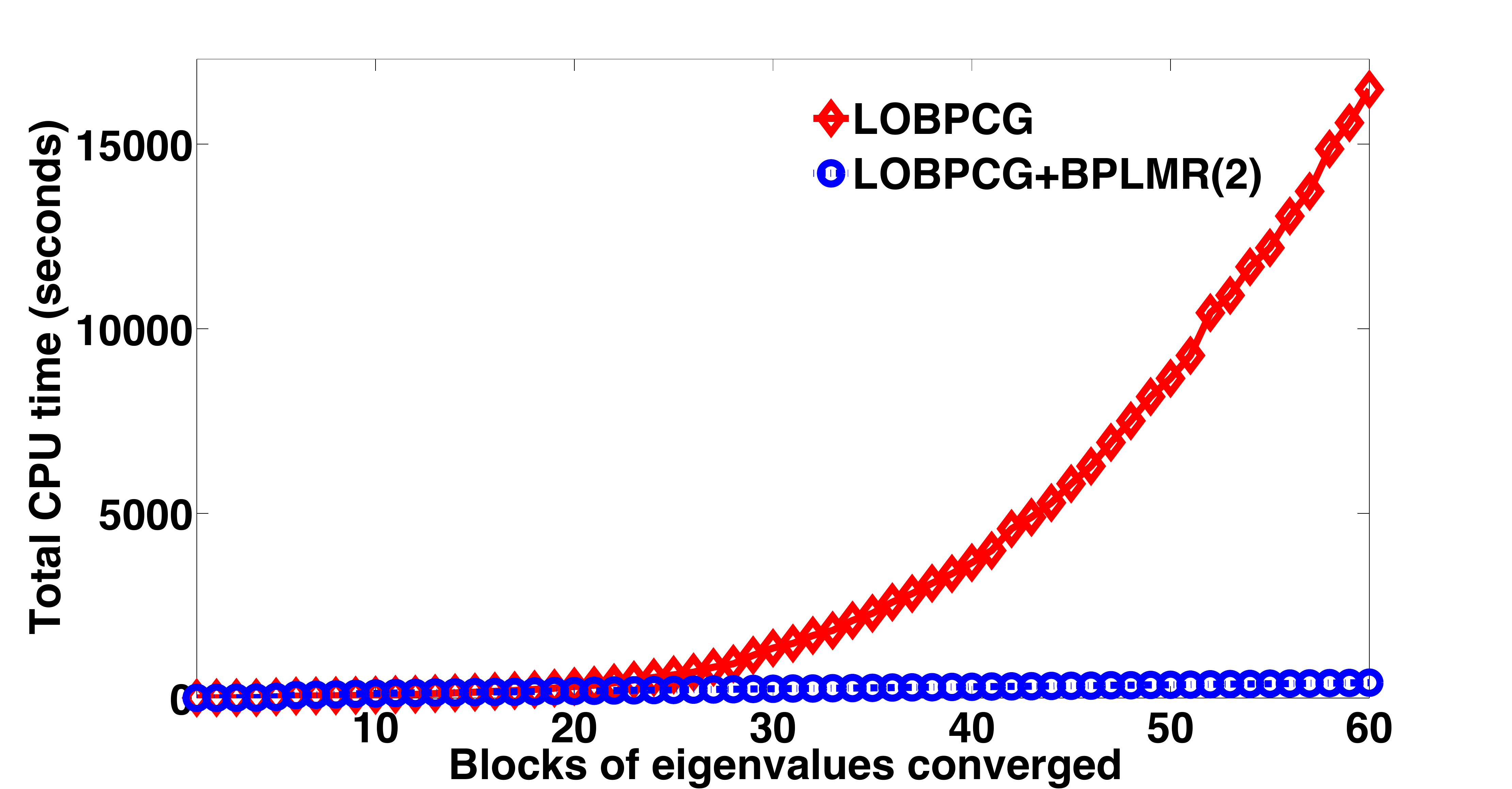}}
\put(125,125) {\includegraphics[width=1.45in]{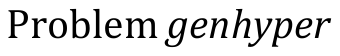}}
\put(-42,10)  {\includegraphics[width=3.0in]{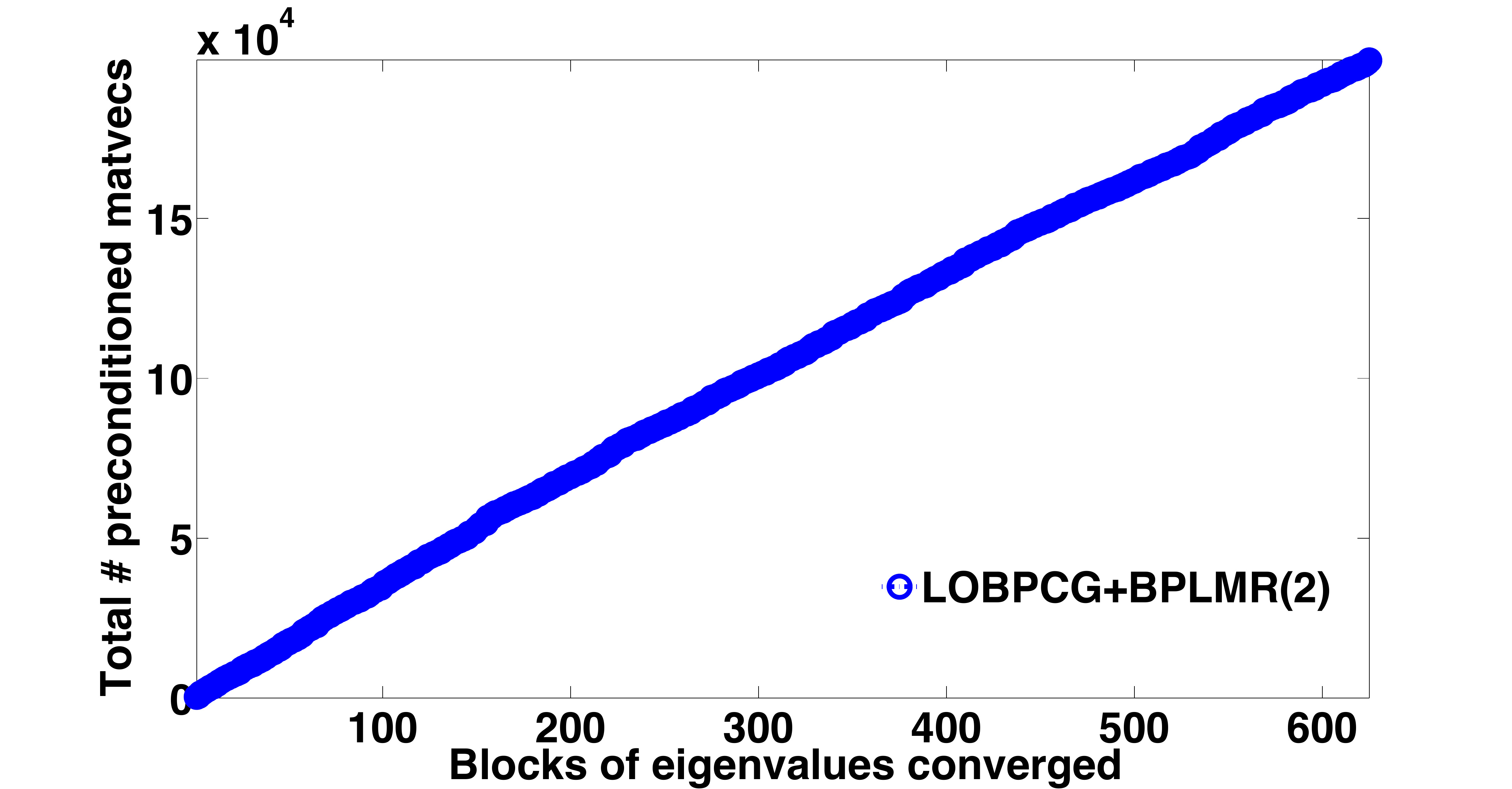}}
\put(163,10)  {\includegraphics[width=3.0in]{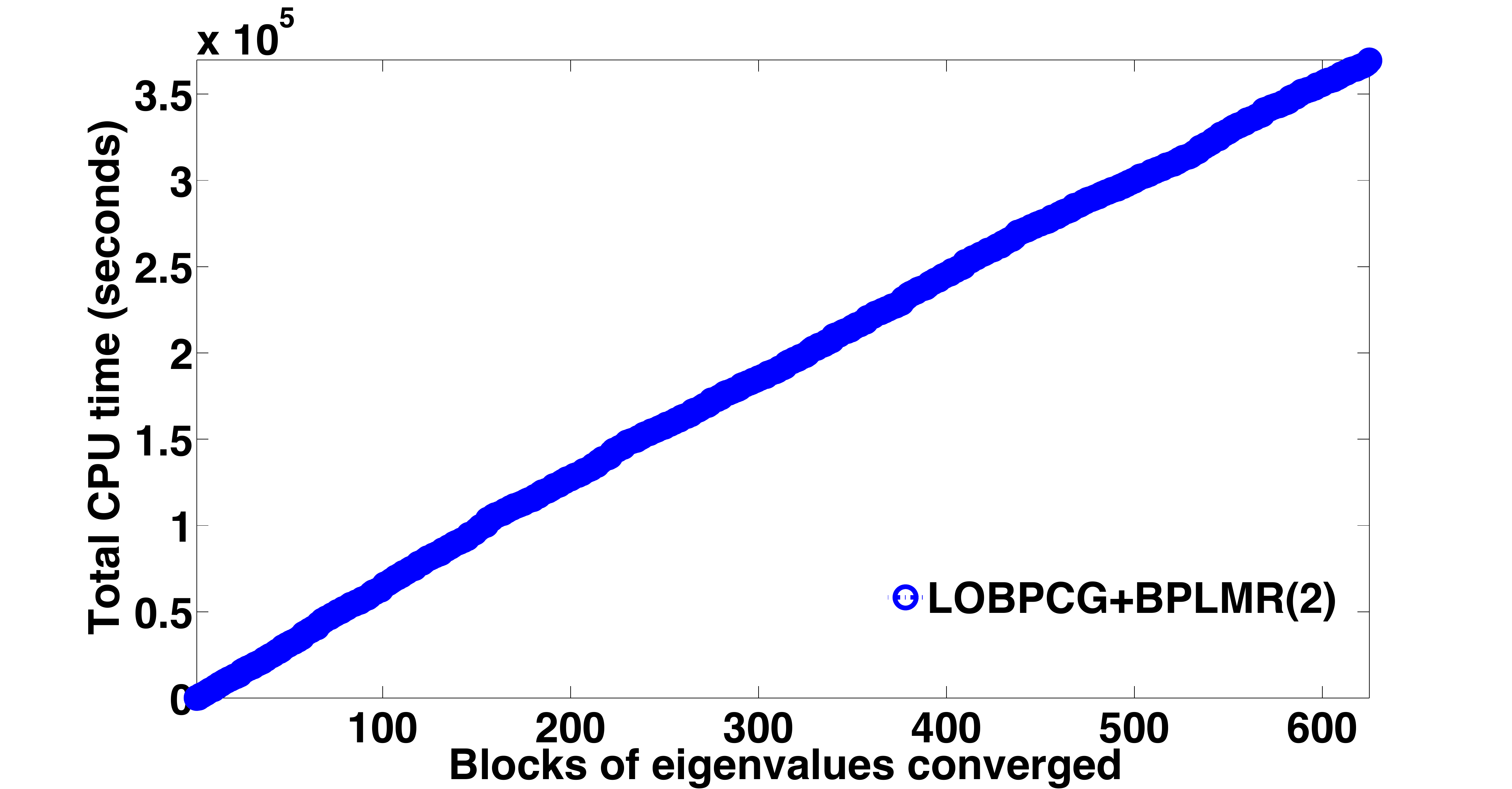}}
\put(125,-5) {\includegraphics[width=1.5in]{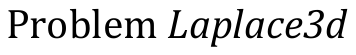}}
\end{picture}
\end{center}
\caption{Total number of preconditioned matrix-vector products and CPU time used by \textup{LOBPCG} and \textup{LOBPCG+BPLMR(2)}.\quad Top: $sleeper$, lowest $500$ eigenvalues \,\, Medium: $genhyper$, highest $600$ eigenvalues\,\,\, Bottom: $Laplace3d$, lowest $10002$ eigenvalues \textup{(}only \textup{LOBPCG+BPLMR(2)} is used\textup{)} 
} \label{fig_cvg_orders}
\end{figure}

Table \ref{tab_perform_bplmrlobpcg} also shows that BPLMR is quite reliable to find semi-simple eigenvalues with correct multiplicities. As usual, we count a distinct eigenvalue with multiplicity $\ell$ as $\ell$ eigenvalues. The last three problems in the table, namely, $sleeper$, $Laplace2d$ and $Laplace3d$ all have a dominant majority of semi-simple eigenvalues. Specifically, only the lowest and the highest eigenvalues of $sleeper$ are simple, and the rest are semi-simple with multiplicity $2$. We see that BPLMR finds all the lowest $501$ eigenvalues with correct multiplicities. For $Laplace2d$, among the lowest $2001$ eigenvalues, $35$ are simple and others are semi-simple with multiplicity $2$. BPLMR obtains all these eigenvalues with correct multiplicities, with the only exception that one semi-simple eigenvalue is found with an incorrectly lowered multiplicity $1$. $Laplace3d$ is the most challenging problem, as only $15$ eigenvalues among the lowest $10002$ ones are simple, and there are $379$ {distinct} semi-simple eigenvalues with multiplicity $3$, $1391$ with multiplicity $6$, and $42$ with multiplicity $12$. BPLMR finds a vast majority of these eigenvalues correctly. It misses $3$ eigenvalues, and it converges repeatedly to $7$ eigenvalues because those eigenvalues already moved out of the window. Using a larger window size will reduce the occurrence of repeated convergence.


In terms of efficiency, the most remarkable pattern we see from Table \ref{tab_perform_bplmrlobpcg} is as follows. LOBPCG based on the optimization of Rayleigh functional values always converges in fewer iterations than BPLMR, but the latter is significantly less expensive in arithmetic cost and thus takes much less CPU time if many (a few hundred or more) eigenvalues are desired. This observation is also clearly illustrated in Figure \ref{fig_cvg_orders} for problems $sleeper$ and $genhyper$ as an example. As we explained, this is because BPLMR uses partial deflation, instead of the highly expensive complete deflation as LOBPCG does. Moreover, BPLMR based on partial deflation only needs a \emph{fixed} amount of memory that depends on the block size, the search subspace dimension and the window size, but \emph{not} on the total number of desired eigenvalues $n_d$. The converged eigenvectors that have moved out of the window can be put on external storage because they will not be involved in subsequent computation of new eigenvalues. We see from Table \ref{tab_perform_bplmrlobpcg} and Figure \ref{fig_cvg_orders} that BPLMR with the moving-window-style deflation strategy is highly competitive if a large number of successive eigenvalues are desired.









\section{Conclusion}
We have developed a Preconditioned Locally Minimal Residual (PLMR) method for computing interior eigenvalues of nonlinear Hermitian eigenproblems $T(\lambda)v=0$ that admit a variational characterization of eigenvalues. We discussed the construction of the search subspace, stabilization of preconditioning, subspace projection and extraction, deflation, local convergence, and the extension to block variants. Our new algorithms are competitive in the rate and the robustness of convergence toward desired interior eigenvalues near a given shift. We also proposed a moving-window-style partial deflation strategy that enables BPLMR to compute a large number of successive eigenvalues. Numerical experiments show that the new approach is reliable, and is dramatically more efficient than PCG methods for computing many extreme eigenvalues.

\bigskip

\end{document}